\newcommand\aut[1]{[#1]_{\mathbb{T}^2}}
\def\flot{\Phi_{p,q,r}}
\def\flotqr{\Phi_{2,q,r}}
\def\flotpqrs{\Phi_{p,q,r,s}}
\def\Gpqr{\Gamma_{p,q,r}}
\def\Gqr{\Gamma_{2,q,r}}
\def\Gpqrs{\Gamma_{p,q,r,s}}
\def\Hy{\mathbb{H}^2}
\def\Orb{\Hy/\Gpqr}
\def\Orbqr{\Hy/\Gqr}
\def\Orbpqrs{\Hy/\Gpqrs}
\def\Sph{\mathbb S}
\def\SLZ{\mathrm{SL}_2(\Z)}
\def\Sqr{S_{2,q,r}}
\def\Spqr{S_{p,q,r}}
\def\Spqrs{S_{p,q,r,s}}
\def\wSqr{\widehat S_{2,q,r}}
\def\wSpqr{\widehat S_{p,q,r}}
\def\wSpqrs{\widehat S_{p,q,r,s}}
\def\U{T^1}
\def\UOrb{\U\Orb}
\def\Z{\mathbb Z}
\numberwithin{equation}{section}
\newtheorem{theorem}{Theorem}[section]
\newtheorem{lemma}[theorem]{Lemma}
\newtheorem{definition}[theorem]{Definition}
\newtheorem{question}[theorem]{Question}
\newtheorem{thm}{Theorem}
\newtheorem{ques}{Question}
\newtheorem{prop}[thm]{Proposition}
\newtheorem*{coro}{Corollary}
\def\WEN{\color{black}}
\definecolor{darkgreen}{rgb}{0,0.4,0}
\title{Genus one Birkhoff sections for geodesic flows}
\author{Pierre Dehornoy}
\date{20 August 2012}
\address{Mathematisches Institut, Universit\"at Bern, Sidlerstrasse 5, 3012 Bern, Switzerland}
\email{dehornoy@math.unibe.ch}
\keywords{Birkhoff section, geodesic flow, suspension flow}
\subjclass[2000]{Primary 37D40, 53D25; Secondary 37C27}
\begin{document}

\maketitle

\begin{abstract}
We prove that the geodesic flow on the unit tangent bundle to every hyperbolic 2-orbifold that is a sphere with 3 or 4 singular points admits explicit genus one Birkhoff sections, and we determine the associated first return maps. 
\end{abstract}

In this article we investigate from a topological viewpoint a particular family of 3-dimensional flows, namely the geodesic flows on the unit tangent bundle to a hyperbolic 2-orbifold.

Our main subject of investigation is Birkhoff sections. A \emph{Birkhoff section} for a 3-dimensional flow~$\Phi$ is a surface whose boundary is the union of finitely many periodic orbits of~$\Phi$, and such that every other orbit of~$\Phi$ intersects the interior of the surface within a bounded time. Introduced by Poincar\'e and Birkhoff~\cite{Birkhoff}, Birkhoff sections are useful, when they exist, because they provide a description of the flow, minus the orbits forming the boundary of the section, as the suspension flow constructed from the first return map on the section. 

Fried proved~\cite{FriedAnosov} that every transitive 3-dimensional Anosov flow admits Birkhoff sections. (We recall that a 3-dimensional flow associated with a vector field~$X$ on a Riemannian manifold~$M$ is said to be \emph{of Anosov type}~\cite{Anosov} if there exists a decomposition of the tangent bundle~$TM$ as $\langle X\rangle \oplus E^{ss} \oplus E^{su}$, such that $E^{ss}$ is uniformly contracted by the flow and $E^{su}$ is uniformly contracted by the inverse of the flow. A flow is called \emph{transitive} if it admits at least one dense orbit.) Now Fried's proof gives no indication about the complexity of the Birkhoff sections, in particular their genus, and the following is still open:

\begin{ques}[Fried]
\label{Q:Fried}
Does every transitive 3-dimensional Anosov flow admit a genus one Birkhoff section?
\end{ques}

A positive answer, that is, the existence of a genus one Birkhoff section, gives rich information about the considered flow. Indeed, if $S$ is a Birkhoff section for an Anosov flow, the stable and unstable foliations induce two transverse one-dimensional foliations on~$S$ that are preserved by the first return map. Moreover, Fried showed~\cite{FriedAnosov} that the first return map is a pseudo-Anosov diffeomorphism. If $S$ has genus one (and some condition on the boundary of~$S$ is satisfied), the first return map is even an Anosov diffeomorphism, and, therefore, it is conjugated to a linear automorphism of the torus---hence it essentially reduces to an element of~$\SLZ$.  

Here we consider Question~\ref{Q:Fried} for particular 3-dimensional flows, namely the geodesic flows associated with hyperbolic 2-dimensional objects. If $\Gamma$ is a cocompact Fuchsian group~$\Gamma$, the quotient~$\Hy/\Gamma$ of the hyperbolic plane~$\Hy$ is a compact hyperbolic 2-orbifold. Then the geodesic flow on~$\U\Hy=\left\{(x,v)\in T\Hy \mid \vert v\vert = 1\right\}$ projects on~$\U\Hy/\Gamma$. The latter is a 3-dimensional Anosov flow, called the \emph{geodesic flow} on~$\U\Hy/\Gamma$. If no point of~$\Hy$ has a nontrivial stabilizor, $\Hy/\Gamma$ is a hyperbolic surface, and the answer to Question~A for the corresponding geodesic flow is positive: a construction of Birkhoff~\cite{Birkhoff} exhibits two genus one Birkhoff sections. Otherwise, $\Hy/\Gamma$ is a 2-orbifold with singular points, and the only known results about Birkhoff sections for the corresponding flows concern orbifolds which are spheres with three singular points of respective order~$2,3,$ and $4g{+}2$, with $g\ge 2$, for which we proved~\cite{Levogyre} that every collection of periodic orbits bounds a Birkhoff section, but with no control of the genus of the sections. The first aim of the current article is to establish the following general positive result:

\begin{thm}
\label{T:A}
For every hyperbolic 2-orbifold~$\Hy/\Gamma$ that is a sphere with 3 or 4 singular points, the geodesic flow on~$\U\Hy/\Gamma$ admits some (explicit) genus one Birkhoff sections. 
\end{thm}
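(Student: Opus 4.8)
The plan is to produce the sections by hand, in the spirit of Birkhoff's construction on surfaces, and then to read off the first return maps directly. Throughout, write $\mathbb{O}=\Hy/\Gamma$ and identify $\U\mathbb{O}$ with the homogeneous space $\mathrm{PSL}_2(\R)/\Gamma$, so that $\flotg$ is right translation by the diagonal one-parameter subgroup; periodic orbits of $\flotg$ then correspond to conjugacy classes of hyperbolic elements of $\Gamma$, that is, to oriented closed geodesics of $\mathbb{O}$. The orbifold $\mathbb{O}$ is the double of a hyperbolic triangle (three singular points) or quadrilateral (four singular points) $\Delta$ whose vertices are the cone points and whose angles are $\pi/p_i$; let $\sigma$ denote the associated orientation-reversing involution of $\mathbb{O}$. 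Its fixed locus $\gamma=\partial\Delta$ is a single closed geodesic through all the cone points (at a cone point of order $p$ the two germs of $\gamma$ make angle $\pi/p=\tfrac12\cdot\tfrac{2\pi}{p}$ on each side, so $\gamma$ is genuinely geodesic there), and $\mathbb{O}\setminus\gamma$ is a disjoint union of two open discs; in particular $\gamma$ fills $\mathbb{O}$.

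I would build the candidate section from the \emph{Birkhoff annuli} of an explicit finite family of closed geodesics: $\gamma$ itself, its images under the cone-point rotations, and if necessary one or two further symmetric geodesics. The Birkhoff annulus of a geodesic $c$ is the set of unit vectors based on $c$ pointing to one prescribed side; it sits inside the circle bundle $\pi^{-1}(c)\subset\U\mathbb{O}$ and is tangent to $\flotg$ exactly along the two lifts $c^\uparrow,c^\downarrow$ of $c$ (the geodesic traversed with its two orientations), which are periodic orbits. One then glues several such annuli along the fibre circles lying over their intersection points, resolving each such circle so that the result is an embedded, connected surface $S$ whose boundary is a subcollection of the $c^\uparrow,c^\downarrow$. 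The whole point is to choose the family and the resolutions so that an Euler characteristic count gives $\chi(S)=-b$ with $b=|\partial S|$, i.e. genus one, rather than the genus zero one gets from a single Birkhoff annulus; that this can be arranged is the first real task, and it is complicated by the fact that the natural filling geodesic $\gamma$ runs through the cone points, so that the exceptional Seifert fibres have to be folded into the resolution with care.

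Granting $S$, the verification that it is a Birkhoff section is comparatively soft. Off $\partial S$ the surface is positively transverse to $\flotg$, and along $\partial S$ the flow is tangent to a periodic orbit; the only delicate points are the cone points, where one works in the orbifold chart $\Hy$ with its finite rotation group. For the bounded return time one uses that each complementary region of the chosen geodesics is a compact convex orbifold disc, so every geodesic segment lying inside it has length at most some universal $L$; hence every complete geodesic of $\mathbb{O}$, and so every orbit of $\flotg$ other than the boundary orbits, crosses the relevant Birkhoff annuli transversally, alternating sides at successive crossings, and therefore meets the interior of $S$ within time $2L$. Equivalently, one checks that $\U\mathbb{O}\setminus S$ is a union of solid tori around the boundary orbits on which $\flotg$ is conjugate to a rotation, which by Fried's theory forces $S$ to be a Birkhoff section.

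Finally, since $S$ has genus one and $\flotg$ is Anosov, Fried's theorem identifies the first return map $\phi\colon S\to S$ with an Anosov diffeomorphism, hence with a linear automorphism of the torus up to conjugacy; to determine it I would cut $\U\mathbb{O}$ along $S$ and transport a basis of $H_1(S)$, conveniently two arcs carried by the stable and unstable directions of $\flotg$, around the return, obtaining $\phi$ as an explicit class $\aut{A}$ with $A\in\GLZ$ and $|\operatorname{tr}A|>2$ (or $A^2$ hyperbolic when $\det A=-1$). The four-singular-point case runs in parallel throughout, the fixed locus of $\sigma$ now being a geodesic quadrilateral through the four cone points. The main obstacle is thus twofold and concentrated at the cone points: organising the family of geodesics and the surgeries so that the assembled surface is connected of genus exactly one while remaining a genuine section, and then carrying the homology basis through the surgered exceptional fibres to pin down the matrix $A$.
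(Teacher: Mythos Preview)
Your proposal is an outline rather than a proof. You correctly identify the framework---surfaces bounded by periodic orbits, an Euler-characteristic count, Fried's linearisation of the first return---but you explicitly defer the one step that carries all the content, namely exhibiting geodesics and resolutions for which the assembled surface has genus exactly one, calling this ``the first real task'' and ``the main obstacle'' and then stopping. Since producing that surface \emph{is} the theorem, this is a genuine gap. There is also a concrete error in your setup: the curve $\gamma=\partial\Delta$ is \emph{not} an orbifold geodesic at a cone point of even order~$p$. An orbit of~$\flotg$ is the projection of a line in~$\Hy$, and at a cone point of order~$p$ such a projection exits in the direction $\pi\bmod 2\pi/p$ relative to its entry; for even~$p$ this equals the entry direction (the geodesic reflects, as the paper's curve~$h$ does at the order-$2$ point), not the bisecting direction~$\pi/p$. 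So $\partial\Delta$ has a genuine corner whenever some $p_i$ is even, and your justification that ``bisecting the cone angle'' implies geodesic is incorrect.

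More decisively, for three cone points all of order~$\ge 3$ the paper states outright that your strategy cannot succeed: ``when we start from a collection of closed geodesics and try to apply [Birkhoff's construction], the obtained surface has genus at least two.'' The idea that is missing from your plan, and that makes the proof work, is to abandon Birkhoff annuli of closed geodesics and instead use the period-three \emph{billiard trajectory}~$b$ inside the acute triangle $PQR$, which bounces once on each side and avoids every cone point. The two small ``orthic'' triangles that~$b$ cuts out are foliated by convex butterfly curves~$\alpha_s$ centred at a bouncing point~$A$, and the section~$\Spqr^A$ is the closure of the set of unit tangents to this foliation; a direct cell count gives $\chi=-1$ with a single boundary component~$\tilde b$, hence genus one. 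For four cone points one replaces~$b$ by a pair of closed geodesics $b_1\cup b_2$ cutting out two quadrangles, again butterfly-foliated. The first return is then computed by building one such section per bouncing point and factoring the return as a cyclic composite of half-returns between consecutive sections, each of which acts on an explicit pair of loops by a companion matrix $\left(\begin{smallmatrix}0&-1\\1&t\end{smallmatrix}\right)$.
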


Note that Theorem~\ref{T:A} essentially covers all spherical orbifolds with at most four singular points. Indeed, there exist 2-orbifolds which are spheres with zero, one or two singular points, but they are either bad orbifolds in the sense of Thurston~\cite{Thurston} or of spherical type. In both cases, the associated geodesic flows do not have the Anosov property we are interested in.

As explained above, when an Anosov flow admits a Birkhoff section of genus one, the associated first return map is particularly simple, namely it is conjugated to an automorphism of the torus, hence it is specified by a conjugacy class in~$\SLZ$. Then the question naturally arises of determining which conjugacy classes occur in this way. For instance, writing~$X$ for~$\left( \begin{smallmatrix} 1&1\\0&1\end{smallmatrix}\right)$ and $Y$ for~$\left( \begin{smallmatrix} 1&0\\1&1\end{smallmatrix}\right)$, and denoting by~$\aut{A}$ the linear automorphism of the torus associated with a $\SLZ$-matrix~$A$, Ghys~\cite{GhysGodbillon}, Hashiguchi~\cite{Hashiguchi} and Brunella~\cite{Brunella} showed that, in the case of a hyperbolic surface of genus~$g$, the first return maps associated with the above mentioned Birkhoff sections are conjugated to the automorphisms~$\aut{(X^2Y^{g-1})^2}$ and~$\aut{X^2(X^2Y^{g-1})^2}$. 

\begin{ques}[Ghys]
\label{Q:Ghys}
Let $A$ be a matrix in~$\SLZ$ with $\mathrm{trace}(A)>2$. Does there exist a hyperbolic 2-orbifold~$\Hy/\Gamma$ such that the geodesic flow on~$\U\Hy/\Gamma$ has a genus one Birkhoff section whose associated first return map is conjugated to~$\aut{A}$?
\end{ques}

We recall (see for instance \cite{LivreAlgebre}) that every element of~$\SLZ$ that is not of finite order is conjugated to a finite product of the matrices~$X$ and~$Y$, and that the product is unique up to cyclically permuting the factors and up to exchanging~$X$ and~$Y$. Moreover, the element is hyperbolic if and only if the product contains both~$X$ and~$Y$. So Question~\ref{Q:Ghys} may be rephrased as a question about which expressions in~$X$ and~$Y$ can occur. We prove:

\begin{thm}
\label{T:B}
For every matrix~$A$ that can be expressed with at least one~$X$ and one~$Y$ and at most four~$X$ or at most four~$Y$, there exists a hyperbolic 2-orbifold~$\Hy/\Gamma$ and a genus one Birkhoff section for the geodesic flow on~$\U\Hy/\Gamma$ whose first return map is conjugated to~$\aut{A}$.
\end{thm}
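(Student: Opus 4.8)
The plan is to exploit the explicit nature of the Birkhoff sections produced for Theorem~\ref{T:A}. As part of that construction, for each triple $(p,q,r)$ and each quadruple $(p,q,r,s)$ for which $\Orb$ (resp.\ $\Orbpqrs$) is hyperbolic, we will have a concrete genus one Birkhoff section together with a combinatorial description of its first return map, hence an explicit word in $X$ and $Y$ --- well-defined up to cyclic permutation and the exchange $X\leftrightarrow Y$, as recalled above --- representing the conjugacy class $\aut{A}$ that it realizes. Theorem~\ref{T:B} is then the assertion that, as the parameters vary, these words exhaust the set $\mathcal W$ of words in $X,Y$ containing at least one $X$ and one $Y$ and with $\min(\#X,\#Y)\le 4$, again up to the two identifications. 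So the proof splits into (i) reading off the first return maps of the sections of Theorem~\ref{T:A}, and (ii) a purely combinatorial check that the resulting family covers $\mathcal W$.

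For step~(i), one records that the three–cone–point orbifolds $\Spqr$ (and the subfamily $\Sqr$) yield a family of words whose ``$X$–part'' is controlled by one of the cone orders while the remaining exponents are essentially the other two orders, and that the four–cone–point orbifolds $\Spqrs$ add one more block; moreover each orbifold of Theorem~\ref{T:A} comes with two distinct sections, and one may permute the roles of the singular points, so the family is closed under the relevant dihedral symmetries of the words. Using the $X\leftrightarrow Y$ symmetry it then suffices to produce every word with at most four $X$'s, i.e.\ every $w=X^{a_1}Y^{b_1}\cdots X^{a_k}Y^{b_k}$ with $a_i,b_i\ge 1$ and $a_1+\cdots+a_k\le 4$ (so automatically $k\le 4$), up to cyclic permutation.

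For step~(ii), one organizes $\mathcal W$ by $k$, the number of $X$–blocks: $k=1$ gives $X^aY^b$ with $a\le 4$; $k=2$ gives $X^{a_1}Y^{b_1}X^{a_2}Y^{b_2}$ with $a_1+a_2\le 4$; and $k=3,4$ similarly. The words with $k\le 3$ should be matched to the three–cone–point sections and those with $k=4$ to the four–cone–point sections, checking that every composition of an integer $\le 4$ into $\le 4$ positive parts is attained for the $a_i$ while the $b_i$ are simultaneously prescribable arbitrarily within the hyperbolic locus ($\sum 1/p_i<n-2$). Particular care is needed near the boundary of the parameter region and in the small–parameter cases (e.g.\ words with some $b_i=1$, or triples $(2,q,r)$ with $q$ small that sit close to the spherical/bad locus): some of these must be obtained from a four–cone–point section when the natural three–cone–point one is unavailable.

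The main obstacle is step~(ii) intertwined with the exact dependence of the exponents on $(p,q,r)$ and $(p,q,r,s)$. The sections of Theorem~\ref{T:A} might a priori produce only ``balanced'' words --- as in the genus~$g$ surface case, where the first return map is the square $\aut{(X^2Y^{g-1})^2}$ --- so realizing the genuinely asymmetric elements of $\mathcal W$ will rely on the cone orders of $\Spqr$ and $\Spqrs$ being independently variable, and possibly on switching between the two sections attached to a given orbifold. One must also verify that the unavoidable overlaps between the three– and four–cone–point families, and between these and the cyclic / $X\leftrightarrow Y$ identifications, do not leave any word of $\mathcal W$ uncovered; this is where the bulk of the bookkeeping lies.
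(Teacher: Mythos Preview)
Your plan is exactly the paper's: Theorem~\ref{T:B} is deduced from the explicit first return maps of Proposition~\ref{T:FirstReturn}, followed by the combinatorial check that every eligible word is hit. Two remarks will make your execution much shorter than you anticipate.

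First, you have the roles of $X$ and $Y$ reversed relative to what the sections actually produce. The formulas are
\[
X^{r-6}Y,\qquad X^{q-4}YX^{r-4}Y,\qquad X^{p-3}YX^{q-3}YX^{r-3}Y,\qquad X^{p-2}YX^{q-2}YX^{r-2}YX^{s-2}Y,
\]
so it is the \emph{number of $Y$'s} that is fixed (equal to $1$, $2$, $3$, or $4$ according to which family you are in), while the $X$-exponents are the cone orders shifted by a constant and hence independently and freely prescribable. The right organization of $\mathcal W$ is therefore by the total number of $Y$'s, not by the number of $X$-blocks; the $n$-cone-point family with $n=3$ covers the cases of one, two, and three $Y$'s, and $n=4$ covers four $Y$'s.

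Second, your worries about ``balanced'' words, boundary cases, and uncovered overlaps evaporate once the formulas are in hand. Given any target word $X^{a_1}Y\cdots X^{a_k}Y$ with $k\le 4$ and $a_i\ge 0$ (not all zero), one simply sets the cone orders to $a_i+c$ for the appropriate constant $c\in\{2,3,4,6\}$; the hyperbolicity inequality $\sum 1/(a_i+c)<\text{(threshold)}$ is then automatic as soon as at least one $a_i$ is positive, i.e.\ as soon as the word contains at least one~$X$. No special bookkeeping is required, and the permutation freedom in the four-point case handles the cyclic ambiguity directly.
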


The matrix $X^{t-2}Y$ has trace~$t$ and, for $t> 2$, it is eligible for Theorem~\ref{T:B}. So we immediately deduce:

\begin{coro}
For every~$t$ larger than~2, there exists a hyperbolic 2-orbifold and a genus one Birkhoff section for its geodesic flow such that the first return map has trace~$t$.
\end{coro}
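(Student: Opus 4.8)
The plan is to deduce the statement directly from Theorem~\ref{T:B}, by producing for each integer $t > 2$ a single matrix of trace~$t$ that is eligible for that theorem. (Here $t$ must be an integer, since the first return map of a genus one Birkhoff section is conjugated to a torus automorphism and hence has integer trace.) The natural candidate is $A = X^{t-2}Y$, for which I would first record the elementary computation
\[
X^{t-2}Y = \begin{pmatrix} 1 & t-2 \\ 0 & 1 \end{pmatrix}\begin{pmatrix} 1 & 0 \\ 1 & 1 \end{pmatrix} = \begin{pmatrix} t-1 & t-2 \\ 1 & 1 \end{pmatrix},
\]
so that $\mathrm{trace}(A) = t$.

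Next I would check the hypotheses of Theorem~\ref{T:B}. For $t > 2$ the exponent $t-2$ is a positive integer, so the word $X^{t-2}Y$ involves at least one~$X$ and at least one~$Y$; moreover it involves exactly one~$Y$, hence at most four. Thus $A$ is expressible with at least one~$X$ and one~$Y$ and at most four~$Y$, which is precisely the condition required by Theorem~\ref{T:B} (via the ``at most four~$Y$'' branch, regardless of how large $t$ is).

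Applying Theorem~\ref{T:B} to this~$A$ then yields a hyperbolic 2-orbifold $\Hy/\Gamma$ and a genus one Birkhoff section for the geodesic flow on~$\U\Hy/\Gamma$ whose first return map is conjugated to~$\aut{A}$. Since the trace is a conjugacy invariant in~$\SLZ$, this first return map has trace~$t$, which is what we wanted. The argument has no real obstacle: all the substance is carried by Theorem~\ref{T:B}, and the only points to get right are the $2\times2$ product above and the observation that in the chosen word it is the number of~$Y$'s, rather than the number of~$X$'s, that one bounds by four.
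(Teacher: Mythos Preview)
Your proof is correct and follows exactly the paper's approach: choose $A = X^{t-2}Y$, note that it has trace~$t$, and observe that for $t>2$ this word contains at least one~$X$ and exactly one~$Y$, so Theorem~\ref{T:B} applies. The paper states this in a single sentence; your added details (the explicit matrix product and the remark that trace is a conjugacy invariant) are fine elaborations of the same idea.
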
\WEN

Theorems~\ref{T:A} and~\ref{T:B} strengthen the close connection first described by Fried~\cite{FriedAnosov} and Ghys~\cite{GhysGodbillon} between the two main known classes of Anosov flows in dimension~3, namely the geodesic flows on negatively curved orbifolds, and the suspension flows associated with linear automorphisms of the torus. In particular, as they go in the direction of positive answers to Questions~\ref{Q:Fried} and~\ref{Q:Ghys}, a possible interest of these results is to support the idea that there might exist only one transitive Anosov flow up to virtual almost conjugation: two flows~$\phi_1, \phi_2$ on two 3-manifolds~$M_1, M_2$ are said to be \emph{almost conjugated} if there exist two finite collections~$C_1, C_2$ of periodic orbits of~$\phi_1, \phi_2$ such that the manifolds $M_1\setminus C_1$ and~$M_2\setminus C_2$ are homeomorphic and that the induced flows are topologically conjugated, and ``virtually'' is added when the property may involve a finite covering. A positive answer to Question~\ref{Q:Fried} would imply that every transitive Anosov flow is almost conjugated to some suspension flow of a torus automorphism, whereas a positive answer to Question~\ref{Q:Ghys} would imply that any two suspension flows would be virtually almost conjugated. 

At the technical level, Theorems~\ref{T:A} and~\ref{T:B} will come as direct consequences of the following more comprehensive result.

\begin{prop}
\label{T:FirstReturn}
For all $q, r$ with $q\le r $ and $\frac 1 2+\frac 1 q+\frac 1 r<1$, the geodesic flow on the 3-manifold $\U\Orbqr$ admits an explicit Birkhoff section of genus one; the associated first return map is conjugated to 
\[
    \begin{cases}
       \aut{X^{r-6}Y} & \mbox{for } q=3, \\
        \aut{X^{q-4}YX^{r-4}Y} & \mbox{otherwise}.
    \end{cases}
\]

For all $p,q,r$ larger than~$2$ with $\frac 1 p+\frac 1 q+\frac 1 r<1$, the geodesic flow on the 3-manifold $\U\Hy/\Gpqr$ admits two explicit Birkhoff sections of genus one; the associated first return map are conjugated to 
\[
        \aut{X^{p-3}YX^{q-3}YX^{r-3}Y} \mbox{ and } \aut{X^{p-3}YX^{r-3}YX^{q-3}Y}.
\]

For all $p,q,r,s$ larger than~$1$ with $\frac 1 p+\frac 1 q+\frac 1 r+\frac 1 s<2$, the geodesic flow on the 3-manifold $\U\Orbpqrs$ admits six explicit Birkhoff sections of genus one; the associated first return map are conjugated to 
\[ \aut{X^{p-2}YX^{q-2}YX^{r-2}YX^{s-2}Y}
\] 
and to the five other classes obtained by permuting the exponents in the latter expression (up to cyclic permutation the 24 possible permutations give rise to six classes).
\end{prop}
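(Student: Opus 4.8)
We will treat the three cases uniformly. Write $\Sigma=\Hy/\Gamma$ for the orbifold under consideration and identify $\U\Sigma$ with $\mathrm{PSL}_2(\R)/\Gamma$, so that the geodesic flow becomes the right translation action of the diagonal one-parameter subgroup. The first task is to pick, for each $\Sigma$, an explicit \emph{filling} geodesic graph $G\subset\Sigma$: for $\Spqr$ (resp.\ $\Spqrs$) the union of the three (resp.\ four) geodesic arcs joining the cone points cyclically, and for $\Sqr$ a variant with fewer edges adapted to the order-$2$ point. In every case the complementary faces of $G$ are compact geodesic polygons whose vertices are the (torsion) cone points, which lift to honest points of $\Hy$. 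We then let $\Spqr$ be the surface in $\U\Sigma$ made of the unit vectors based on $G$ and pointing to one fixed side, completed over the exceptional fibers so that its boundary becomes a union of periodic orbits, and similarly for $\Sqr$ and $\Spqrs$. The several admissible choices --- of which side, equivalently which cyclic ordering of the cone points along $G$ --- account for the stated numbers of sections: two orderings of a triangle, six of a quadrilateral, while in $\Sqr$ the order-$2$ point leaves a single section and is itself absorbed into the return map, reducing the number of turns (and, when $q=3$, the order-$3$ point is absorbed too, which is the source of the exceptional formula $\aut{X^{r-6}Y}$).

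Next we will check that these surfaces are genuine Birkhoff sections. Transversality away from the boundary is clear, since a unit vector over a smooth point of $G$ pointing to a fixed side is transverse to $G$. For the bounded-return-time property it suffices that every complete geodesic of $\Sigma$ crosses $G$ within bounded forward and backward time; this follows from the filling property, because in $\Hy$ the preimage $\widetilde G$ decomposes the plane into compact polygons, and a geodesic line that enters such a polygon must leave it, hence cross $\widetilde G$, within a time bounded by the diameter of the polygon.

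The core of the proof is the computation of the first return map. Lifting the section and the flow to $\mathrm{PSL}_2(\R)$, a point of the section is the $\Gamma$-class of a unit vector over an edge $e$ of $\widetilde G$; following its geodesic across one complementary polygon, it re-enters the section over a neighbouring edge $e'$, and the return is encoded by the element of $\Gamma$ (a product of side pairings) bringing $e'$ back, together with a shear along~$e'$. In coordinates adapted to each annular piece of the section --- one per $\Gamma$-orbit of edges --- the return map then splits as an alternating product of a transition block conjugate to $Y$ and a power of a block conjugate to $X$, the exponent recording how many times the section wraps around the exceptional fiber over the cone point lying between the two consecutive edges. Reading these contributions off in the cyclic order of the cone points along $G$, and identifying $H_1$ of the capped section with $\Z^2$ via the induced cell structure, produces the announced products $\aut{X^{n_1-c}YX^{n_2-c}Y\cdots}$, with $c=4$ (and the collapse to $\aut{X^{r-6}Y}$) for $\Sqr$, $c=3$ for $\Spqr$ and $c=2$ for $\Spqrs$. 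Pinning down the exact constant $c$ and the exceptional-fiber wrapping numbers --- in particular the extra collapses caused by the order-$2$ and order-$3$ cone points, and the correct orientation conventions --- is the delicate point and the main obstacle of the whole argument.

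Finally, an explicit cell decomposition of the section inherited from $G$ (away from the cone points the section is a circle bundle over the edges of $G$) gives its Euler characteristic and number of boundary orbits, whence $g=1$ via $\chi=2-2g-b$; this establishes Proposition~\ref{T:FirstReturn}. Theorem~\ref{T:A} is then immediate. For Theorem~\ref{T:B} it remains to verify the purely arithmetic fact that any $\SLZ$-matrix expressible with at least one $X$ and one $Y$ and at most four of one of the two letters is conjugate, after a cyclic permutation and possibly the exchange $X\leftrightarrow Y$, to one of the normal forms $X^{r-6}Y$, $X^{q-4}YX^{r-4}Y$, $X^{p-3}YX^{q-3}YX^{r-3}Y$ or $X^{p-2}YX^{q-2}YX^{r-2}YX^{s-2}Y$ with exponents large enough that the hyperbolicity inequalities $\tfrac12+\tfrac1q+\tfrac1r<1$, $\tfrac1p+\tfrac1q+\tfrac1r<1$ or $\tfrac1p+\tfrac1q+\tfrac1r+\tfrac1s<2$ hold; Proposition~\ref{T:FirstReturn} then supplies the required orbifold and section.
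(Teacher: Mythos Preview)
Your outline diverges from the paper's construction in a way that creates a genuine gap, and it also omits the paper's key computational device.

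\textbf{The boundary problem.} You propose to take as $G$ the union of the geodesic arcs joining the cone points cyclically, and to build the section from the unit vectors over $G$ pointing to one side. But a geodesic arc terminating at a cone point of order $\ge 3$ does not close up to a periodic orbit of the geodesic flow: in the unit tangent bundle its endpoints lie in the exceptional Seifert fibres over the cone points, and those fibres are \emph{not} orbits of the geodesic flow. Your phrase ``completed over the exceptional fibres so that its boundary becomes a union of periodic orbits'' does not repair this. The paper avoids this issue entirely by choosing, as the base curve, a genuine closed geodesic that stays away from the cone points of order $\ge 3$: for $p=2$ the geodesic $h=P\bar P$ (which is closed because $P$ has order $2$); for $p,q,r\ge 3$ the period-$3$ billiard trajectory $b$ in the acute triangle $PQR$, whose bounce points $A,B,C$ are interior to the edges; for four cone points, two closed geodesics $b_1,b_2$. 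In the last two cases the section is not ``vectors over $G$ pointing to one side'' at all, but the set of unit tangents to a one-parameter family of convex ``butterfly'' curves foliating the two orthic triangles (or quadrangles) cut out by $b$ (or $b_1\cup b_2$).

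\textbf{The missing factorisation.} The paper's computation does not analyse a single section and its first return. Instead it builds \emph{several} isotopic sections---two, three, or four, one for each bounce point of the billiard curve---equipped with explicit pairs of loops giving bases of $H_1$. Each orbit of the geodesic flow hits these sections cyclically, and the transition map from one section to the next is shown (by unfolding the orbifold around a single cone point and pushing the basis loops across) to be a companion matrix $\left(\begin{smallmatrix}0&-1\\1&t\end{smallmatrix}\right)=X^{-1}YX^{t-1}$. The first return map is then the product of these companion matrices, which immediately gives the announced $X^{\bullet}Y$ words. Your sketch alludes to an alternating product of $X$- and $Y$-blocks but does not set up the intermediate sections that make this factorisation visible, and you yourself flag the determination of the constant $c$ as ``the main obstacle''. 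In the paper that constant is not guessed: it falls out mechanically from the companion-matrix form once the intermediate sections and their bases are in place.
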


Proposition~\ref{T:FirstReturn} involves three cases. The strategy of proof is the same in every case, but the difficulty is increasing, so that we shall present each proof in a separate section. Starting from the orbifold~$\Orbqr$ (\emph{resp.} $\Orb$, \emph{resp.} $\Orbpqrs$), the idea is to explicitly construct two (\emph{resp.} three, \emph{resp.} four) Birkhoff sections for the geodesic flow, to compute their Euler characteristics (thus checking that their genus is one), and to find a suitable pair of loops on each of them that form a basis of their first homology group. In the first case, these two Birkhoff sections correspond to an avatar of Birkhoff's construction~\cite{Birkhoff}, but, in the other two cases, the method is new. Then, the idea is to start from one Birkhoff section, to follow the geodesic flow until one reaches another section, and to look at how the loops on the first section are mapped on the second one. Our particular choice of the sections will guarantee that this application is described by a simple matrix of the form~$X^iY$. Iterating this observation twice (\emph{resp.} three, \emph{resp.} four times), one obtains the expected form for the first return map.

Let us conclude this introduction with two more remarks about particular cases. First, the case~$p=2, q=3, r=7$ in Proposition~\ref{T:FirstReturn} leads to a toric section with first return map~$\left( \begin{smallmatrix} 2&1\\1&1\end{smallmatrix}\right)$. This matrix is known to correspond to the monodromy of the figure-eight knot. Therefore, after removing one periodic orbit, the geodesic flow of the $(2,3,7)$-orbifold is conjugated to the suspension flow on the complement of the figure-eight knot---one of the first flows whose periodic orbits have been studied from the topological point of view~\cite{BW}. Concerning the topology of the underlying 3-manifolds, this implies that the Seifert fibered space~$\U\Hy/\Gamma_{2,3,7}$ can be obtained from~$\Sph^3$ by a surgery on the figure-eight knot. A celebrated theorem by Thurston~\cite[Theorem 5.8.2]{Thurston} states that, for every hyperbolic knot in~$\Sph^3$, only finitely many surgeries yield a non-hyperbolic 3-manifold. Our construction exhibits such an example for the figure-eight knot.

Finally, when $(p, q, r)$ goes down to the limit values $(2,3,6), (2,4,4)$ or $(3,3,3)$, the corresponding orbifolds with 3 singular points are of Euclidean type. The same situation occurs when $(p,q,r,s)$ reaches the limit value $(2,2,2,2)$. It turns out that the surfaces obtained by extending the construction of Proposition~\ref{T:FirstReturn} still are Birkhoff sections for the corresponding geodesic flows (which are no longer of Anosov type), and that the first return maps are given by the same formulas. But, as can be expected, the associated matrices are of parabolic type, namely they are powers of the matrix~$X$.

\noindent {\bf Acknowledgments:} It is a pleasure to thank \'Etienne Ghys for many discussions on topics related to this article, and Tali Pinski for an invitation and a collaboration which are at the origin of this work.

\section{The case $p=2$: symmetric boundary}
\label{S:p2}

Here we establish the first case in Proposition~\ref{T:FirstReturn}, namely that of an orbifold with three singular points among which one corresponds to an angle~$\pi$. In the whole section, $q, r$ denote two positive integers satisfying $1/2+1/q+1/r<1$. Then $\Orbqr$ is the unique hyperbolic orbifold with three singular points, called~$P, Q, R$, of respective order~$2, q, r$. We denote by~$\U\Orbqr$ its unit tangent bundle, and by~$\flotqr$ the geodesic flow on~$\U\Orbqr$. Abusing notation, we also use $PQR$ to refer to a fixed triangle with respective angles~$\pi/2, \pi/q, \pi/r$  in the hyperbolic plane, and we denote by~$\bar P$ the image of~$P$ under the symmetry around~$QR$. The quadrangle~$PQ\bar PR$ is a fundamental domain for the orbifold~$\Orbqr$. 

We first describe two particular Birkhoff sections for~$\flotqr$, called~$\Sqr^Q$ and~$\Sqr^R$. As we shall see, these two sections turn out to be isotopic. Indeed we will show that, starting from~$\Sqr^Q$ and following~$\flotqr$ for some (non-constant) time, we first reach~$\Sqr^R$, and then come back to~$\Sqr^Q$. 
The second step consists in computing the corresponding homeomorphisms from~$\Sqr^Q$ to~$\Sqr^R$, and from~$\Sqr^Q$ to~$\Sqr^R$. 
The benefit of considering two sections instead of one is that the associated matrices are especially simple (these are companion matrices). The first return map on each of the two sections is then obtained by composing the homeomorphisms.

\subsection{Two Birkhoff sections}

The construction of the surfaces~$\Sqr^Q$ and $\Sqr^R$ that we propose is similar to Birkhoff's original construction~\cite{Birkhoff} of sections for the geodesic flow (although Birkhoff only dealt with surfaces, not with orbifolds), and to A'Campo's construction~\cite{ACampo} of fiber surfaces for divide links. 

\begin{figure}[ht]
   	\begin{minipage}[c]{.2\linewidth}
		\centering
		\begin{picture}(20,23)(0,0)
	 	\put(0,0){\includegraphics*[scale=.45]{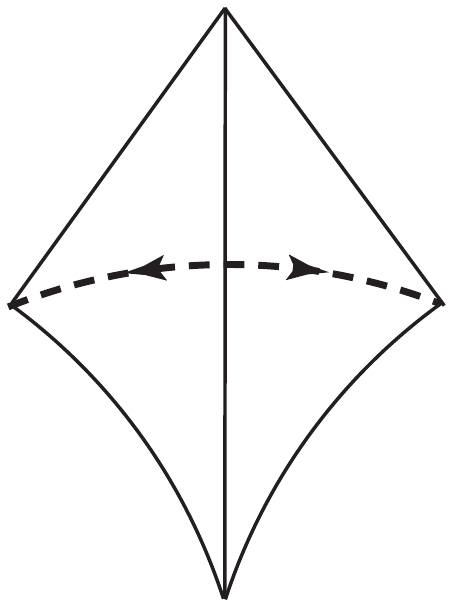}}
		\put(-3,13){$P$}
		\put(21,13){$\bar P$}
		\put(11,27){$Q$}
		\put(11,-0.5){$R$}
		\put(13,16.3){$h$}
		\end{picture}
		\label{F:PossibleShapes}
   	\end{minipage} 
	\hfill
      	\begin{minipage}[c]{.56\linewidth}
		\vspace{-.1cm}
		\hspace{.3cm}
We work in the fundamental domain~$PQ\bar PR$.
Since $P$ corresponds to a singular point of index~$2$, the line~$P\bar P$ is a closed geodesic in the orbifold~$\Orb$: when we reach an end, we just change of direction. Call $h$ this geodesic. It is invariant under the involution that reverses the direction of all tangent vectors. The geodesic ~$h$ divides the orbifolds~$\Orbqr$ into two parts, one containing the singular point~$Q$, and one containing~$R$. We call them the \emph{$Q$-part} and the \emph{$R$-part} respectively.
	\end{minipage}
	\hfill
   	\begin{minipage}[c]{.2\linewidth}
		\centering
		\begin{picture}(20,23)(0,0)
	 	\put(0,0){\includegraphics*[scale=.45]{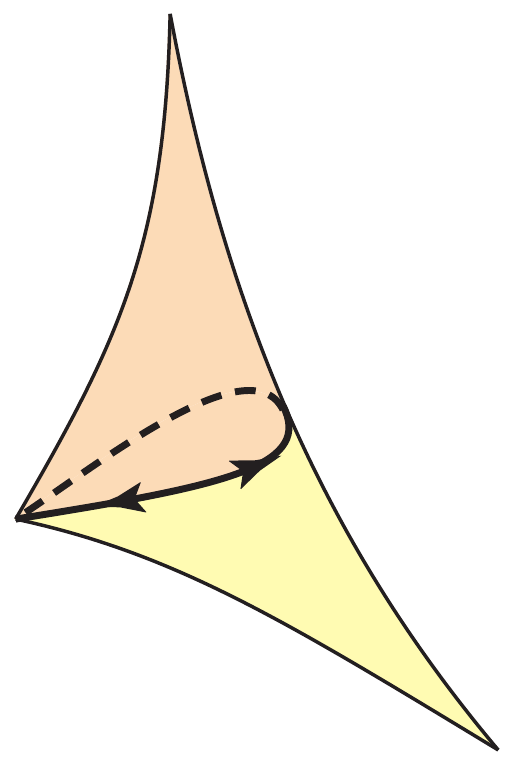}}
		\put(-2,11){$P$}
		\put(9,32){$Q$}
		\put(16,-0.5){$R$}
		\put(8.3,13.2){$h$}
		\end{picture}
		\label{F:PossibleShapes2}
   	\end{minipage} 
\end{figure}

\vspace{-.4cm}

\begin{definition}\label{D:SectionQR}
We call $\Sqr^Q$ (\emph{resp.} $\Sqr^R$) the set, in~$\U\Orbqr$, of all unit tangent vectors to~$h$ that point into the~$Q$-part (\emph{resp.} the $R$-part), plus the whole fiber of the point~$P$. 
\end{definition}

The surfaces~$\Sqr^Q$ and $\Sqr^R$ are topological surfaces. They are both made of one rectangle whose vertical edges are identified using a rotation at~$P$ (see Figure~\ref{F:Order2}). Actually we could smooth them and preserve all their properties at the same time, but we do not need that. The two surfaces intersect along the fiber of~$P$. For~$v$ a unit tangent vector at~$P$, we fix the convention that, at the point~$(P,v)$, the surface~$\Sqr^Q$ is infinitesimally pushed along the direction of~$v$ if~$v$ points into the $Q$-part, whereas~$\Sqr^R$ is infinitesimally pushed along the direction of $v$ if $v$ points into the~$R$-part. This convention is used only for ordering the intersection points of the lift of a geodesic distinct from~$h$ with the two surfaces~$\Sqr^Q$ and~$\Sqr^R$.

\begin{figure}[htb]
	\begin{picture}(100,55)(0,0)
 	\put(0,0){\includegraphics*[scale=.65]{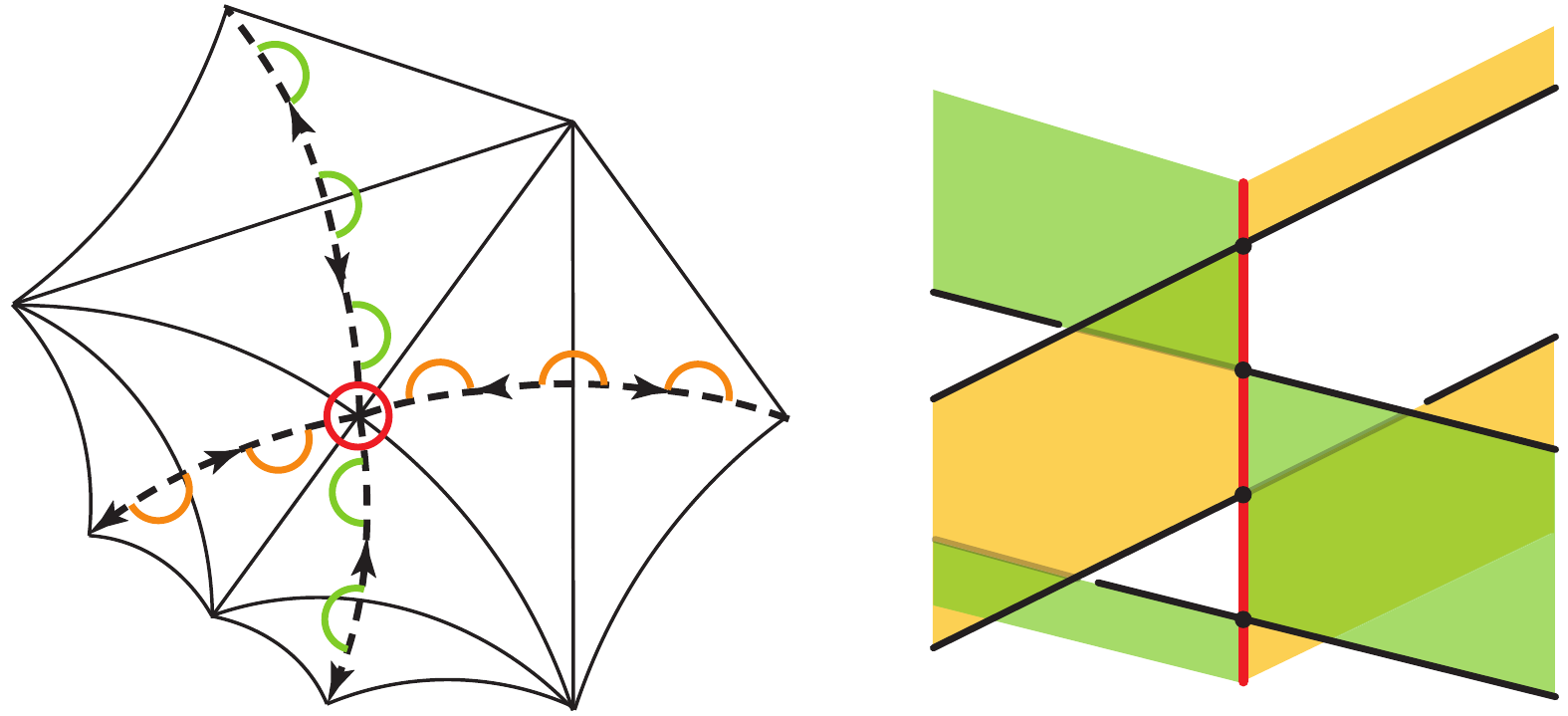}}
	\put(26,17){$P$}
	\end{picture}
	\caption{\small On the left, the union of four fundamental domains for~$\Orbqr$, and four copies of~$\Sqr^Q$. On the right, the surface~$\Sqr^Q$ in a neighbourhood of~$P$, before modding out by the lift of the order~2~rotation around~$P$. It is a topological surface.  Since the lift of the order~2~rotation in the unit tangent bundle is an order~2 screw-motion with no fixed point, we still obtain a topological surface when modding out.}
	\label{F:Order2}
\end{figure}

\begin{lemma}\label{L:S2QR}
The surfaces~$\Sqr^Q$ and~$\Sqr^R$ both have one boundary component, namely the lift~$\tilde h$ of~$h$ in~$\U\Orbqr$. They are genus one Birkhoff sections for the geodesic flow~$\flotqr$. Every orbit of~$\flotqr$ distinct from~$h$ intersects both surfaces alternatively.
\end{lemma}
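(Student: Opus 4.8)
The plan is to analyze the surfaces $\Sqr^Q$ and $\Sqr^R$ by unfolding them in the fundamental quadrangle $PQ\bar PR$, where each is built from a single rectangle (the tangent vectors along $h$ pointing into the appropriate part, parametrized by position on $h$ and angle) with its two vertical edges glued by the lift of the order-$2$ rotation at $P$. First I would verify the boundary claim: the boundary of this rectangle consists of the tangent vectors along $h$ that are \emph{tangent} to $h$ itself (pointing either way along $h$), which is exactly the periodic orbit $\tilde h$, the lift of the closed geodesic $h$; the horizontal edges of the rectangle get identified to $\tilde h$, and one checks that this gives a single boundary circle rather than two. The edge-gluing by an order-$2$ screw-motion with no fixed point (as noted in the caption of Figure~\ref{F:Order2}) keeps the quotient a topological surface. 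For the Euler characteristic: a rectangle has $\chi=1$; identifying the two vertical edges via a single homeomorphism and then tracking what happens to the four corners and the two horizontal edges under the gluing, I expect to get $\chi(\Sqr^Q)=0$, so with one boundary component the surface is a torus with one hole, i.e.\ genus one. The same count applies verbatim to $\Sqr^R$.

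Next I would prove the Birkhoff section property. The key geometric input is the classical fact underlying Birkhoff's construction: a complete geodesic $g$ distinct from $h$ must cross $h$, because $h = P\bar P$ separates $\Orbqr$ into the $Q$-part and the $R$-part, and a bi-infinite geodesic that stayed in one part would have to accumulate on... — more precisely, since $\Orbqr$ is compact and the closed geodesic $h$ is the unique geodesic of its free-homotopy type, any other geodesic $g$ is not contained in either closed-up part and hence crosses $h$ transversally, infinitely often in both time directions, with intersection times bounded (compactness gives a uniform bound on the return time to a neighbourhood of $h$). At each transverse crossing of $g$ with $h$ at a point $x$, the tangent vector of $g$ at $x$ points into either the $Q$-part or the $R$-part, hence lies on $\Sqr^Q$ or on $\Sqr^R$; thus every orbit of $\flotqr$ other than $\tilde h$ meets $\Sqr^Q \cup \Sqr^R$ within bounded time. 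To get that it meets each of $\Sqr^Q$ and $\Sqr^R$ — and alternately — I would use that the two surfaces intersect precisely along the fiber of $P$, together with the pushing convention: as $g$ runs along, between two consecutive crossings of $h$ at points other than $P$ the geodesic must pass through $P$ (where $h$ ``reflects''), and passing through $P$ flips which part the velocity points into; so consecutive intersection points with $h\setminus\{P\}$ alternate between $\Sqr^Q$ and $\Sqr^R$, with the fiber of $P$ serving as the shared transition locus. A separate small check handles orbits through the fiber of $P$ itself.

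The main obstacle I anticipate is the bookkeeping at the singular point $P$ and along the fiber of $P$: making rigorous, with the infinitesimal-pushing convention, that the intersections with $\Sqr^Q$ and $\Sqr^R$ genuinely \emph{alternate} (rather than, say, an orbit touching only one of the two surfaces for a long stretch), and that the boundary is a single circle $\tilde h$ rather than two. This requires carefully understanding the lift to $\U\Orbqr$ of the order-$2$ rotation as a fixed-point-free screw motion and how the rectangle's corners and edges are identified under it. The bounded-return-time estimate, by contrast, is routine once one invokes compactness of $\Orbqr$ and the fact that no geodesic other than $h$ can fail to cross $h$; the genus/Euler-characteristic computation is also routine once the cell structure of the glued rectangle is pinned down. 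So I would allot most of the effort to the local analysis near $P$.
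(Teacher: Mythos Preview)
Your overall plan is close to the paper's, but there are two concrete problems.

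First, the Euler characteristic. You write $\chi(\Sqr^Q)=0$ and then conclude it is a once-punctured torus, which is internally inconsistent: a torus with one boundary component has $\chi=-1$. Your model ``rectangle with the two vertical edges identified by a single homeomorphism'' produces an annulus, with $\chi=0$ and \emph{two} boundary circles, so that picture is too naive. The paper's cell count is one $2$-cell, four $1$-cells (two horizontal, two vertical) and two $0$-cells in the fiber of~$P$, giving $\chi=2-4+1=-1$. The point you are missing is that the lift of the order-$2$ rotation to the unit tangent bundle is a screw motion, and the resulting identification of the rectangle's sides is not the plain annulus gluing; getting this right is exactly the local analysis near $P$ that you correctly flagged as the crux, but you have not actually carried it out.

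Second, and more substantively, your alternation argument is wrong. You assert that between two consecutive crossings of $h$ at points other than $P$ the geodesic must pass through $P$; this is false. A geodesic entering the $Q$-part at $x_1\in h\setminus\{P\}$ simply travels inside the $Q$-part until it exits through some $x_2\in h\setminus\{P\}$, with no need to visit $P$. The correct and much simpler reason for alternation is precisely that $h$ separates: at $x_1$ the tangent points into the $Q$-part (so the orbit meets $\Sqr^Q$), at $x_2$ into the $R$-part (so it meets $\Sqr^R$), and so on. The exceptional case---which you have inverted---is when the geodesic \emph{does} pass through $P$: then it may go from the $Q$-part straight back into the $Q$-part (in the paper's universal-cover language, from one $q$-gon to an adjacent $q$-gon sharing only the vertex over $P$). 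At that instant the orbit meets the whole fiber of $P$, which lies in both surfaces, and the infinitesimal-pushing convention is what orders those two intersections so that alternation survives. The paper handles all of this by lifting to $\Hy$, where the lifts of $h$ tile the plane by compact $q$-gons and $r$-gons; boundedness of the return time is then immediate from the bounded diameter of the tiles, and the $q$-gon/$r$-gon itinerary makes both the alternation and the exceptional $P$-case transparent.
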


\begin{proof}
That both surfaces have boundary~$\tilde h$ is clear from the definition. 

The lifts of~$h$ in the universal cover~$\Hy$ of~$\Orbqr$ divide~$\Hy$ into compact $q$-gons and $r$-gons. Let~$g$ be any geodesic not in the~$\Gqr$-orbit of~$h$, and let~$\tilde g$ be the corresponding orbit of~$\flotqr$. Then~$g$ crosses some copy of~$h$ within a bounded time. When, at the intersection point, $g$ goes from an~$r$-gon to a~$q$-gon, $\tilde g$ intersects~$\Sqr^Q$, and when $g$ goes from a~$q$-gon to an~$r$-gon, $\tilde g$ intersects~$\Sqr^R$. Now $g$ could also go through~$P$ directly from a $q$-gon to another one, or from an~$r$-gon to another one. This can only happen above a copy of~$P$, in which case $\tilde g$ intersects both surfaces at the same time. So, in all cases, $\tilde g$ intersects both surfaces within a bounded time. Therefore both~$\Sqr^Q$ and~$\Sqr^R$ are Birkhoff sections. Owing to the convention about the fiber of~$P$, the curve~$\tilde g$ intersects both surfaces alternatively.

As for the genus, since both surfaces consist of one rectangle whose vertical edges are identified using a rotation at~$P$, they are made of one 2-cell, four 1-cells (two horizontal and two vertical) and two 0-cells (in the fiber of~$P$), so their Euler characteristics is~$-1$. Since the surfaces have one boundary component, they are tori.
\end{proof}

\begin{figure}[ht!]
      	\begin{minipage}[c]{.68\linewidth}
		\vspace{-.5cm}
		\hspace{.3cm}
We now define $c^{Q}_+$ (\emph{resp.} $c^R_+$) to be the family of all tangent vectors to~$h$ that points toward~$Q$, and~$c^{Q}_-$ (\emph{resp.} $c^R_-$) to be obtained from~$c^{Q}_+$ (\emph{resp.} $c^R_+$) by reversing the direction of every tangent vector (or, equivalently, by applying a $\pi$-rotation in each fiber). By definition, $c^{Q}_+$ and, similarly, $c^R_+$, $c^{Q}_-$, and~$c^R_-$, consist of elements of~$\U\Orbqr$ that continuously depend on one parameter, hence they are curves in~$\U\Orbqr$, and even loops since $h$ is a closed curve.	\end{minipage}
	\hfill
   	\begin{minipage}[c]{.3\linewidth}
		\centering
		\begin{picture}(30,28)(0,0)
 		\put(0,-2){\includegraphics*[scale=.6]{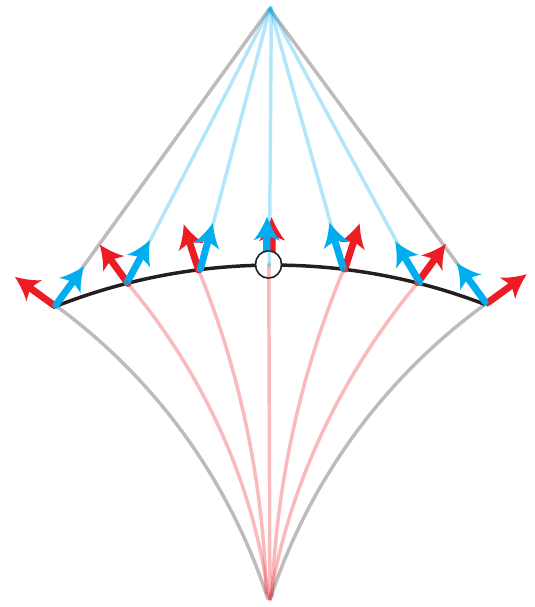}}
		\put(19,32){$Q$}
		\put(18,-1){$R$}
		\put(1,20){\color{blue}{$c^Q_+$}}
		\put(0,12.5){\color{red}{$c^R_-$}}
		\end{picture}
   	\end{minipage} 
\end{figure}

\vspace{-.5cm}

\begin{lemma}\label{L:CurvesQR}
The loops~$c^Q_+$ and $c^R_-$ generate~$\pi_1(\Sqr^Q)$, and $c^R_+$ and $c^Q_-$ generate~$\pi_1(\Sqr^R)$.
\end{lemma}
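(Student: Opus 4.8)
The plan is to exhibit, for each section, an explicit CW-structure (or handle decomposition) in which the two named curves appear as the cores of the two one-handles, and then use the computation of the Euler characteristic from Lemma~\ref{L:S2QR} to conclude they form a basis. Recall from the proof of Lemma~\ref{L:S2QR} that $\Sqr^Q$ is a single rectangle (2-cell) whose two vertical edges are glued by the order-2 rotation at~$P$; the two horizontal edges are the two halves of the boundary~$\tilde h$, and the two vertical edges, once glued, become a single arc in the fiber of~$P$. First I would set up coordinates on this rectangle so that the horizontal direction parametrizes the geodesic~$h$ (going from one copy of~$P$ to the next), and the vertical direction parametrizes the angle of the tangent vector, ranging over the half of the fiber that points into the $Q$-part. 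In these coordinates, the curve~$c^Q_+$ (all vectors tangent to~$h$ pointing toward~$Q$) is the horizontal midline of the rectangle, hence a loop after the vertical edges are identified; and a suitable vertical segment of the rectangle represents, after identification, a loop passing once through the fiber of~$P$, which is exactly the isotopy class of $c^R_-$ — indeed $c^R_-$ consists of the vectors along~$h$ pointing toward~$R$, i.e.\ away from~$Q$, so it is contained in the fiber circles over the points of~$h$ and, being a section of the circle bundle over~$h$ that is ``vertical'' relative to~$c^Q_+$, it is isotopic to the core of the vertical one-handle.

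Next I would verify that these two loops together with a single 0-cell give a CW-structure on the closed-up surface~$\widehat{\Sqr^Q}$ (cap off~$\tilde h$ with a disc) realizing the wedge $S^1 \vee S^1$ as the 1-skeleton: one 0-cell, two 1-cells ($c^Q_+$ and~$c^R_-$), and two 2-cells (the rectangle itself and the capping disc), giving $\chi = 1 - 2 + 2 = 1$, consistent with a genus-one closed surface. Since a genus-one surface has $\pi_1 \cong \Z^2$ generated by any two simple closed curves meeting transversally in one point and whose union is non-separating, it suffices to check that $c^Q_+$ and~$c^R_-$ meet exactly once — which happens over the point where~$h$ carries the vector pointing straight toward~$Q$, the unique intersection of the horizontal midline with the chosen vertical segment — and that neither is null-homotopic, which is immediate since $c^Q_+$ runs the length of the rectangle and $c^R_-$ runs through the (non-contractible after the screw-motion identification) fiber arc. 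The argument for~$\Sqr^R$ with the pair $c^R_+$,~$c^Q_-$ is verbatim the same, exchanging the roles of $Q$ and~$R$.

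The main obstacle I anticipate is the bookkeeping at the fiber of~$P$: because one mods out by the lift of the order-2 rotation, which (as the caption to Figure~\ref{F:Order2} notes) is a fixed-point-free screw-motion of order~2, the ``vertical'' loop is subtler than it looks — the naive vertical edge of the rectangle is only half of the relevant fiber arc, and one must check that going around it once (respecting the identification) is a primitive class, and in particular that $c^R_-$ as defined is isotopic to this primitive vertical loop rather than, say, its square or a multiple. I would handle this by tracking the angle coordinate explicitly through the screw-motion: a vector pointing toward~$R$ at a point of~$h$, transported along~$h$ once, returns rotated by the gluing, and the resulting path in~$\U\Orbqr$ closes up exactly after traversing~$h$ once, so $c^R_-$ is primitive. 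The rest — transversality, the single intersection point, non-separation — is routine once the coordinates are fixed.
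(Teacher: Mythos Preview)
Your proposal contains a genuine error in the description of~$c^R_-$. You write that $c^R_-$ ``consists of the vectors along~$h$ pointing toward~$R$, i.e.\ away from~$Q$'', and that it is ``isotopic to the core of the vertical one-handle''. Neither claim is correct. By definition $c^R_-$ is obtained from $c^R_+$ by reversing every vector, so it consists of the vectors based on~$h$ pointing \emph{away} from~$R$; this is precisely what places $c^R_-$ inside $\Sqr^Q$ rather than $\Sqr^R$. (Also, ``toward $R$'' and ``away from $Q$'' are different directions at every point of $h$ except the one lying on the segment~$QR$.) More importantly, $c^R_-$ is, like $c^Q_+$, a \emph{section} of the half-circle bundle over~$h$: at each point $x\in h$ it selects the single vector pointing from $R$ through~$x$. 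In your rectangle coordinates both curves are therefore graphs over the horizontal direction; neither is a vertical edge. The CW-structure you propose, with one curve as the horizontal midline and the other as a vertical segment, does not match the actual curves, and the first half of your argument collapses. The obstacle you anticipate in your last paragraph is real and is not resolved by the angle-tracking you suggest: $c^R_-$ traverses $h$ once just as $c^Q_+$ does, so it is not the primitive vertical class in those coordinates.

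What does survive is the observation buried in your second paragraph: the two curves are simple closed loops on a once-punctured torus meeting transversally in exactly one point, namely in the fiber over $h\cap QR$, at the vector pointing toward~$Q$ (there the directions ``toward $Q$'' and ``away from $R$'' agree). On a once-punctured torus a single transverse intersection already forces the pair to generate~$\pi_1$. That one sentence is the paper's entire proof; the handle decomposition and Euler-characteristic bookkeeping are unnecessary detours, and in your version they rest on an incorrect picture of the curves.
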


\begin{proof}
The loops $c^{Q}_+$ and $c^{R}_-$ both lie in~$\Sqr^Q$, and they intersect each other exactly once, namely in the fiber of the intersection between $h$ and $QR$, at the vector that points toward~$Q$. This is enough to ensure that these loops generate the fundamental group of a once-punctured torus. The case of the other pair is similar.
\end{proof}

Hereafter we shall denote by~${\wSqr^Q}$ and ${\wSqr^R}$ the surfaces obtained from $\Sqr^Q$ and~$\Sqr^R$ by compactifying their boundaries to a point. By Lemma~\ref{L:S2QR}, they are compact tori. By Lemma~\ref{L:CurvesQR}, the classes~$[c^Q_+]$ and $[c^R_-]$ generate~$H_1({\wSqr^Q}, \Z)$, whereas $[c^R_+]$ and $[c^Q_-]$ generate~$H_1({\wSqr^R}, \Z)$.

\subsection{First return maps}
For every tangent vector $v$ that lies in the surface~$\Sqr^Q$ and not in the fiber of~$P$, we define $\phi^Q(v)$ to be the first intersection between the orbit of~$\flotqr$ starting from~$v$ and the surface~$\Sqr^R$. For $v$ a tangent vector at~$P$ that points into the~$R$-part, we define~$\phi^Q(v)$ as $v$ itself (seen as an element of~$\Sqr^R$). In this way, we obtain a map~$\phi^Q$ from the whole surface~$\Sqr^Q$ into~$\Sqr^R$. Then~$\phi^Q$ extends to a map from~${\wSqr^Q}$ to~${\wSqr^R}$, denoted by~$\widehat{\phi}^Q$.

\begin{lemma}\label{L:QRreturn}
The map~$\widehat{\phi}^Q$ is a homeomorphism from~${\wSqr^Q}$ to~${\wSqr^R}$ of Anosov type. It is conjugated to the linear homeomorphism whose matrix with respect to the two bases~$(c^Q_+, c^R_-)$ and~$(c^R_+, c^Q_-)$ is~$\left(\begin{smallmatrix} 0 & -1\\{1} &{q-2}\end{smallmatrix}\right)$.
\end{lemma}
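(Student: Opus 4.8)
The plan is to track the images of the two generating loops $c^Q_+$ and $c^R_-$ of $H_1(\wSqr^Q)$ under $\widehat\phi^Q$ and express them in the basis $(c^R_+, c^Q_-)$ of $H_1(\wSqr^R)$. First I would understand $\widehat\phi^Q$ geometrically: a vector $v$ tangent to $h$ pointing into the $Q$-part is followed by the geodesic flow; the geodesic enters the $Q$-part, wraps around the singular point $Q$ of order $q$, and exits again across $h$, now pointing into the $R$-part, i.e.\ landing on $\Sqr^R$. So $\widehat\phi^Q$ is essentially a ``first return to $h$ through the $Q$-part'' map, and the key is to see how many times a geodesic segment crosses the relevant sides of the $q$-gon tiling before coming back. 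The homeomorphism and Anosov-type claims follow from Lemma~\ref{L:S2QR} together with Fried's general results cited in the introduction (the return map of a Birkhoff section for an Anosov flow is pseudo-Anosov, and genus one with the appropriate boundary behaviour forces Anosov, hence linear up to conjugacy); the real content is the matrix.

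To get the matrix I would argue at the level of the loops directly. The loop $c^Q_+$ consists of the vectors tangent to $h$ pointing toward $Q$; flowing such a vector forward, the geodesic circles once around $Q$. Because $Q$ has order $q$, the fundamental domain picture shows that the returning geodesic, read back on $h$, has wound around in a way that, homologically on $\wSqr^R$, contributes one copy of $c^Q_-$ (the vectors pointing \emph{away} from $Q$, now seen in $\Sqr^R$) — this should pin down the first column as $\left(\begin{smallmatrix}0\\1\end{smallmatrix}\right)$, i.e.\ $\widehat\phi^Q_*[c^Q_+] = [c^Q_-]$, matching the bottom-left entry $1$ and top-left entry $0$. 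For the second column I would compute $\widehat\phi^Q_*[c^R_-]$: the loop $c^R_-$ lies in $\Sqr^Q$ and points away from $R$, hence into the $Q$-part; flowing it forward, the geodesic makes roughly $q-2$ extra ``half-turns'' around $Q$ before exiting, which translates into $(q-2)$ copies of $c^Q_-$ plus one copy of $-c^R_+$ (the sign coming from orientation reversal / the direction of crossing), giving the second column $\left(\begin{smallmatrix}-1\\q-2\end{smallmatrix}\right)$. The bookkeeping of these winding numbers is most safely done by lifting everything to $\Hy$, drawing the geodesic segment across the chain of $q$-gons and $r$-gons, and counting intersections with the lifts of $h$ and of $QR$, which is exactly the combinatorial input that makes the companion-matrix shape appear.

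The main obstacle, and the step I expect to require the most care, is fixing \emph{signs and orientations} consistently: one must orient $h$, its lift $\tilde h$, the four loops $c^Q_\pm, c^R_\pm$, and the surfaces $\wSqr^Q, \wSqr^R$ coherently, and then verify that the crossing of a geodesic with $h$ going ``$r$-gon to $q$-gon'' versus ``$q$-gon to $r$-gon'' produces the claimed $+$ and $-$ contributions rather than the opposite. The convention about infinitesimally pushing $\Sqr^Q$ and $\Sqr^R$ off the fiber of $P$ (stated after Definition~\ref{D:SectionQR}) is precisely what makes $\widehat\phi^Q$ well defined on the nose, so I would invoke it explicitly when the flowed-forward vector passes through $P$. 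Once the orientations are nailed down, the count of turns around $Q$ — one turn for $c^Q_+$, and $q-2$ turns plus a reversal for $c^R_-$ — is forced by the fact that the $q$ lifts of $h$ around a vertex of order $q$ cut $\Hy$ into $q$-gons, and the determinant being $1$ (it lies in $\SLZ$, as the map is orientation preserving and a homeomorphism) is a useful consistency check: $0\cdot(q-2) - (-1)\cdot 1 = 1$, confirming the answer $\left(\begin{smallmatrix}0&-1\\1&q-2\end{smallmatrix}\right)$.
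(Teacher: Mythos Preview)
Your proposal is correct and follows essentially the same route as the paper: track the two generating loops under the flow and read off the companion matrix, with the homeomorphism/Anosov claims handled by Fried's argument. The paper makes your ``lift to $\Hy$ and count crossings'' step concrete by unfolding $\Orbqr$ around~$Q$ into a $q$-gon $R_1\dots R_q$ (so the geodesics starting on $c^Q_+$ go \emph{straight through}~$Q$ to the opposite side rather than ``circling'' it, and the image of $c^R_-$ is read off on the $q{-}1$ remaining sides via an explicit isotopy); apart from this sharpening of language and the explicit use of the fixed vector at~$P$ pointing toward~$R$, your outline matches the paper's proof.
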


\begin{proof}
The continuity of~$\phi^Q$ is clear. Since~$\phi^Q$ is obtained by following the orbits of~$\flotqr$, the injectivity is also clear. Therefore $\phi^Q$ is a homeomorphism, and so is~$\widehat{\phi}^Q$. The weak stable and unstable foliations of the geodesic flow in~$\U\Orbqr$ induce on~$\Sqr^Q$ two foliations which are respectively contracting and expanding. Therefore, by Fried's argument~\cite{FriedAnosov}, $\widehat{\phi}^Q$ is of Anosov type.

For determining the images of~$c^Q_+$ and~$c^R_-$, we unfold~$\Orbqr$ around the point~$Q$ (see Figure~\ref{F:2QR}) by gluing~$q$ copies of the quadrangles~$PQ\bar PR$. We thus obtain a~$q$-gon, denoted by~$R_1\dots R_{q}$ (with $R_1=R$). We denote by~$P_0, \dots, P_{q-1}$ the middle of the edges (with~$P_0=P$ and $P_1=\bar P)$, and for $i=1\dots, g$, we call~$M_i$ the intersection of~$QR_i$ with~$P_{i-1}P_i$.

\begin{figure}[ht]
	\begin{picture}(64,60)(0,0)
	\put(0,0){\includegraphics*[scale=.5]{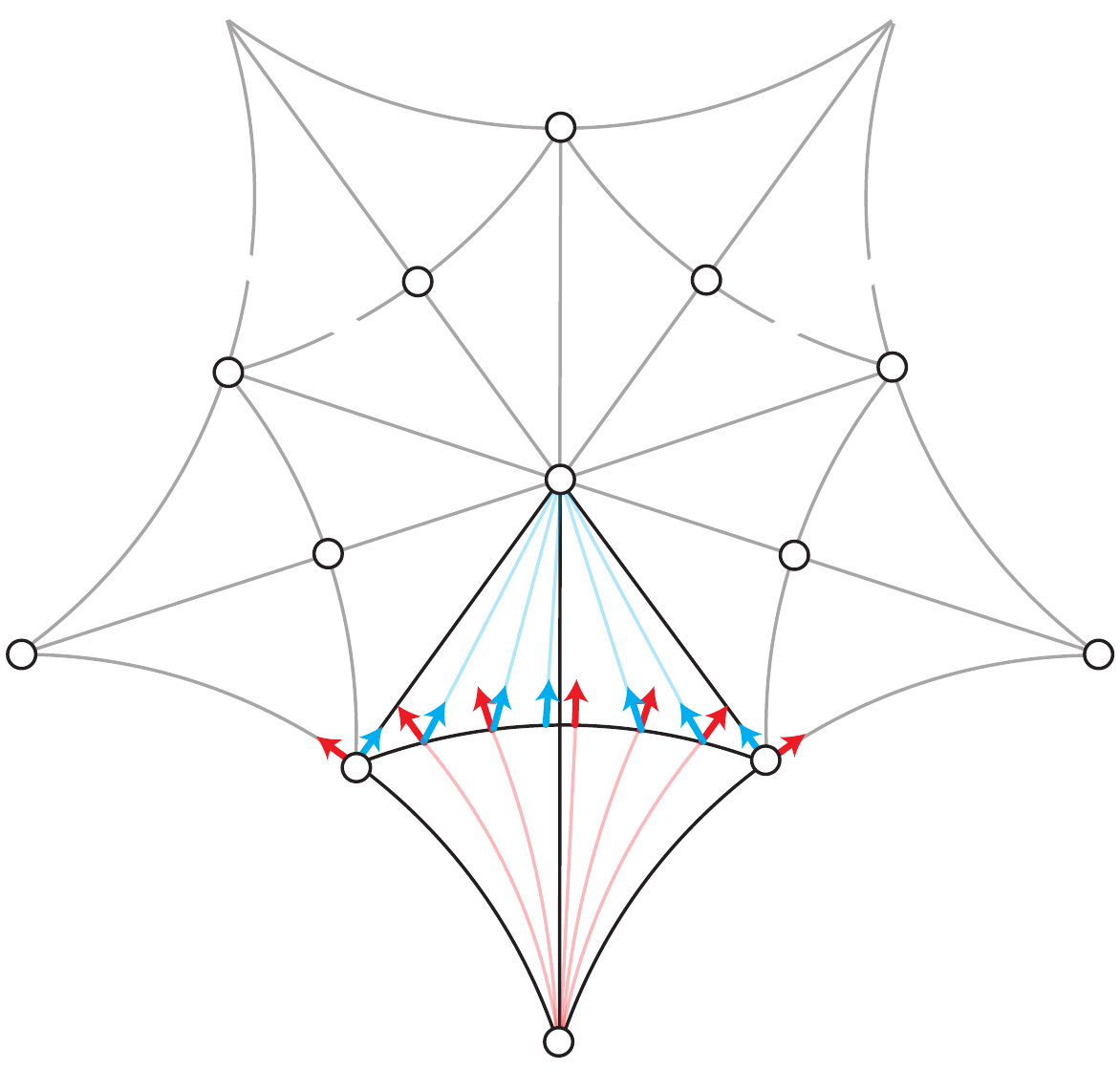}}
	\put(30.5,0){$R_1$}
	\put(59.5,20){$R_2$}
	\put(-3,20){$R_q$}
	\put(43.5,27.5){$M_2$}
	\put(11,28){$M_q$}
	\put(14,14){$P_0$}
	\put(42,14){$P_1$}
	\put(48.5,37){$P_2$}
	\put(3.5,37){$P_{q-1}$}
	\put(30.5,32){$Q$}
	\put(35,44){$M_{({q+1})/2}$}
	\put(15,44.5){$M_{({q+3})/2}$}
	\put(24,52.5){$P_{({q+1})/2}$}
	\end{picture}

	\vspace{-.3cm}
	\begin{picture}(128,55)(0,0)
 	\includegraphics*[scale=.5]{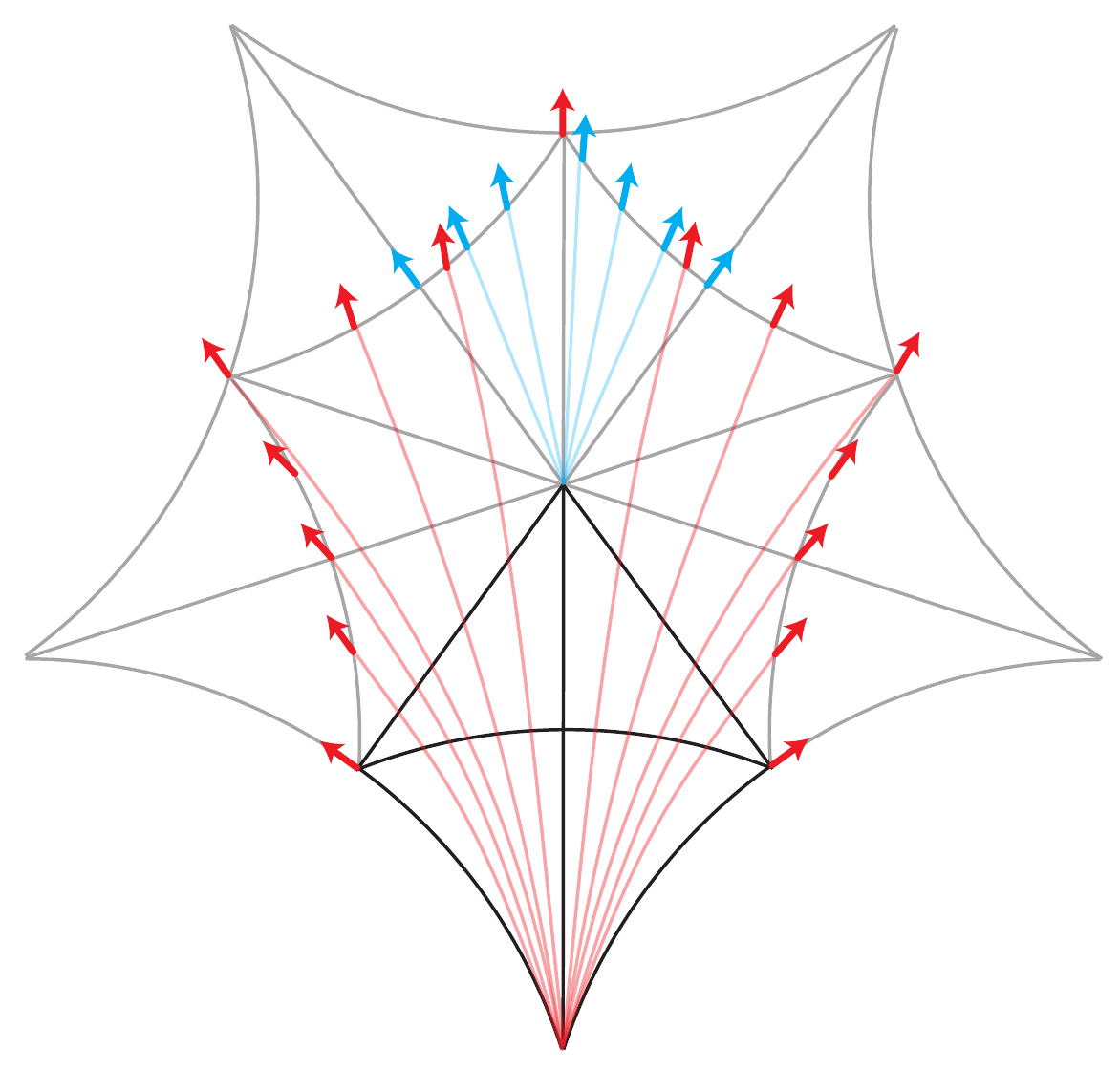}
 	\includegraphics*[scale=.5]{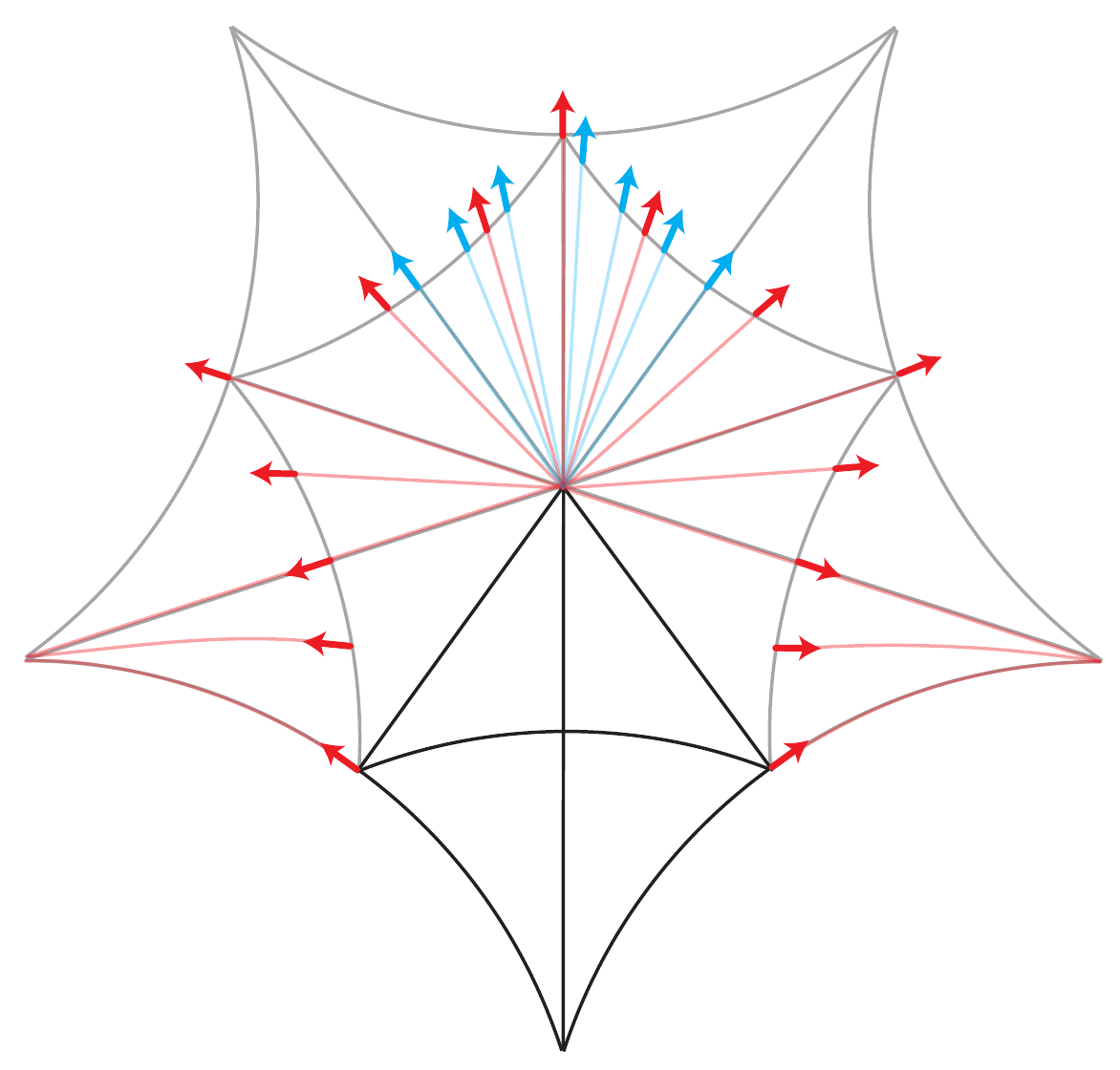}
	\end{picture}
	\caption{\small } 
	\label{F:2QR}
\end{figure}

The curve~$c^Q_+$ is made of those vectors above~$P_0P_1$ that point toward~$Q$. Therefore, under the geodesic flow, $c^Q_+$ enters the $q$-gon~$P_0\dots P_{q-1}$. It reaches first the fiber of~$Q$, and then continues until it reaches (the fibers of the points of) the side(s) opposite to~$P_0P_1$ in~$P_0\dots P_{q-1}$. At that moment, it points into the~$R$-part, and therefore belongs to~$\Sqr^R$: this is~$\phi^Q(c^Q_+)$. Depending on the parity of~$q$, this curve lies in the fibers of the segment~$P_{{q}/2}P_{({q+2})/2}$ or in the fibers of~$M_{({q-1})/2}P_{({q+1})/2}$ and~$P_{({q+1})/2}M_{({q+3})/2}$ (see Figure~\ref{F:2QR}). In both cases, it projects, after modding out by the rotation at~$Q$, to the curve~$c^Q_-$, yielding $\phi^Q(c^Q_+)=c^Q_-$.

For~$\phi^Q(c^R_-)$, we see that the vector of~$c^R_-$ lying above~$P$ (which points to~$R$) is fixed by~$\phi^Q$, by definition. When~$\phi^Q$ is applied, that is, when we follow the flow~$\flotqr$, the rest of~$c^R_-$ goes through the $q$-gon~$P_0\dots P_{q-1}$. Therefore $\phi^Q(c^R_-)$ is made of all the vectors in the fibers of~$P_0\dots P_{q-1}$ that do not lie above~$P_0P_1$ and whose opposite points toward~$R_1$ (see~Figure~\ref{F:2QR} bottom left). Since we are only interested in the class of $\phi^Q(c^R_-)$ in the first homology group, we can apply any convenient isotopy to~$\phi^Q(c^R_-)$ inside~$\Sqr^R$. We thus rotate all vectors (without changing the fibers in which they lie) in the following way: the vectors in the fibers of the segment~$P_0M_g$ are rotated so that they point toward $R_g$ (note that the vector at~$P_0$ does not change), similarly the vectors in the fibers of~$P_1M_2$ are rotated so that they point toward~$R_2$, and all other vectors (corresponding to vectors in the fibers of~$M_2P_2$, or $P_2P_3$, or \dots, or $P_{g-2}P_{g-1}$, or $P_{g-1}M_g$) are rotated so that their opposite points toward~$Q$. The obtained curve (see~Figure~\ref{F:2QR} bottom right) is equal, in the orbifold~$\Orbqr$, to the concatenation of the opposite of $c^R_+$ and $q{-}2$ times~$c^Q_-$. Indeed the parts above~$P_1M_2$ and $P_1M_2$, with the given orientation, add up to the opposite of~$c^R_+$, and the parts above~$M_2P_2P_3\dots P_{g-2}P_{g_1}M_g$ add up to $q{-}2$ times~$c^Q_-$. 
\end{proof}

Arguing similarly, we define a map~$\phi^R$ from~$\Sqr^R$ to~$\Sqr^Q$ in a way that is exactly symmetric to what we did for~$\phi^Q$. 

\begin{proof}[Proof of Proposition~\ref{T:FirstReturn} (first case)]
Consider the two Birkhoff sections~$\Sqr^Q$ and $\Sqr^R$ given by Definition~\ref{D:SectionQR} and Lemma~\ref{L:S2QR}. 
By Lemma~\ref{L:S2QR}, starting from any point of~$\Sqr^Q$ and following~$\flotqr$ for some time (which is bounded, but not the same for all points), we reach the surface~$\Sqr^R$, and then reach~$\Sqr^Q$ again. Therefore the first return map on~$\Sqr^Q$ is obtained by applying~$\phi^Q$ first and then~$\phi^R$. When compactifying, we obtain a Anosov diffeomorphism which is the product of~$\widehat{\phi}^Q$ and~$\widehat{\phi}^R$. Since an Anosov diffeomorphism of the torus is always conjugated to its action on homology, by Lemma~\ref{L:QRreturn}, the first return map is conjugated to the product~$\left(\begin{smallmatrix} 0 & -1 \\ 1 & r-2 \end{smallmatrix} \right) \left(\begin{smallmatrix} 0 & -1 \\ 1 & q-2 \end{smallmatrix} \right)$. 

Since every matrix of the form~$\left(\begin{smallmatrix} 0 & -1 \\ 1 & t \end{smallmatrix} \right)$ is equal to~$X^{-1}YX^{t-1}$, the previous product is equal to $X^{-1}YX^{r-3}X^{-1}YX^{q-3}$, which is conjugated to $X^{q-4}YX^{r-4}Y$. For $q=3$, the exponent~$q-4$ is negative, and therefore the formula can be simplified. An easy computation leads then to $X^{r-6}Y$.
\end{proof}

\section{The case $p\ge 3$: non-symmetric boundary} 
\label{S:pqr}

We now turn to the second case in Proposition~\ref{T:FirstReturn}, namely that of an orbifold with three singular points of index larger than~$2$. In the whole section, $p, q, r$ denote three integers larger than 2 and satisfying $1/p+1/q+1/r<1$. Then $\Orb$ is the unique hyperbolic orbifold with three singular points, called~$P, Q, R$, of respective order~$p, q, r$. We denote by~$\U\Orb$ its unit tangent bundle, and by~$\flot$ the geodesic flow on~$\U\Orb$. As in Section~\ref{S:p2}, we fix a triangle~$PQR$ in the hyperbolic plane with respective angles~$\pi/p, \pi/q, \pi/r$, and we denote by~$\bar P$ the image of~$P$ under the symmetry around~$QR$. The quadrangle~$PQ\bar PR$ is a fundamental domain for the orbifold~$\Orb$. 

The idea is to mimic the two steps of the case~$p=2$, using three surfaces instead of two. The surfaces we use here cannot be described using Birkhoff's construction: when we start from a collection of closed geodesics and try to apply the latter, the obtained surface has genus at least two. On the other hand, the second step is similar to that of Section~\ref{S:p2}, with three maps instead of two.

\begin{figure}[ht!]
      	\begin{minipage}[c]{.68\linewidth}
		\vspace{-.4cm}
		\hspace{.3cm}
\subsection{Three Birkhoff sections}

Since the triangle~$PQR$ is acute (that is, all angles are smaller than~$\pi/2$), there exists in~$PQR$ a closed billiard trajectory of period~3, which bounces every edge once. Call $A, B, C$ the bouncing points with the segments~$QR, RP, PQ$ respectively, and call~$\bar B$ and $\bar C$ the images of~$B$ and $C$ under the symmetry around $QR$. Then there is a closed geodesic, that we denote by~$b$ (for billiard),  travelling from~$B$ to $C$, then from $\bar C$ to $B$, from $\bar B$ to $\bar C$, and finally from $C$ to~$\bar B$. There is another one travelling in the other direction, but we do not consider it now.
		\vspace{-.4cm}
	\end{minipage}
	\hfill
   	\begin{minipage}[c]{.3\linewidth}
		\centering
		\begin{picture}(25,32)(0,0)
 		\put(-5,2){\includegraphics*[scale=.6]{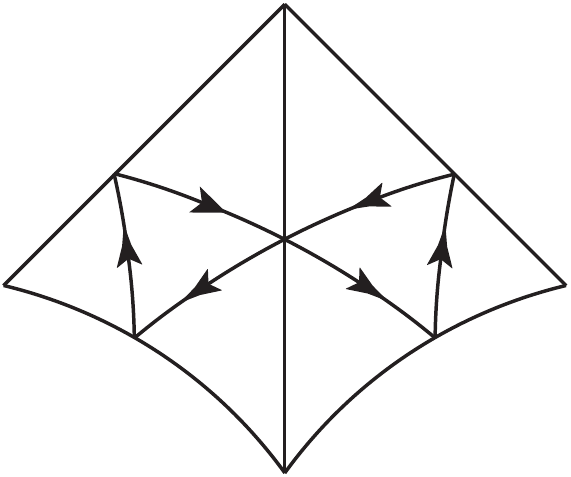}}
		\put(-6,15){$P$}
		\put(29,15){$\bar P$}
		\put(13,31){$Q$}
		\put(13,1){$R$}
		\put(12.5,18){$A$}
		\put(1.5,7){$B$}
		\put(0,21.5){$C$}
		\put(21,7){$\bar B$}
		\put(23,21.5){$\bar C$}
		\end{picture}
   	\end{minipage} 
\end{figure}

We now adapt Birkhoff's construction and describe a surface whose boundary in~$\UOrb$ is the lift~$\tilde b$ of~$b$. First we observe that $b$ divides the orbifold~$\Orb$ into five regions: three regions that contain one of the singular points, hereafter called the $P$-, the $Q$-, and the $R$-parts, and two triangles delimited by~$b$, hereafter called the \emph{orthic triangles}. Abusing notation, we use the same names when working in the universal cover of~$\Orb$.

\begin{figure}[ht!]
   	\begin{minipage}[c]{.2\linewidth}
		\centering
		\begin{picture}(20,19)(0,0)
 		\put(-2,0){\includegraphics*[scale=.45]{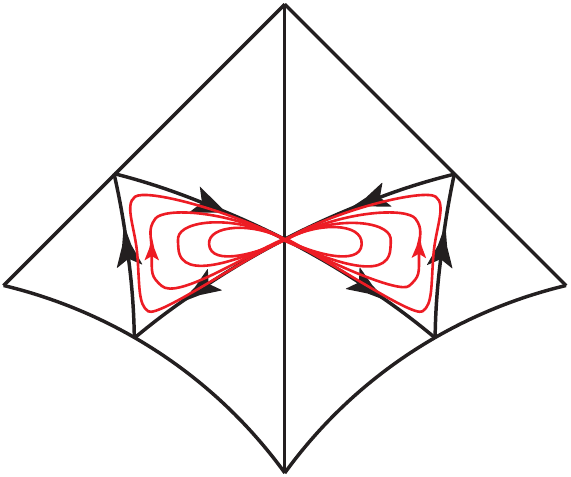}}
		\end{picture}
   	\end{minipage} 
	\hfill
      	\begin{minipage}[c]{.76\linewidth}
		\vspace{-.3cm}
		\hspace{.3cm}
In the fundamental domain~$PQ\bar PR$, the geodesic ~$b$ has the shape of a butterfly centered at~$A$. We now consider a one-parameter family of curves~$(\alpha_s(t))_{s,t\in(0,1)}$ whose union foliates the two orthic triangles, so that every curve~$\alpha_s$ is a smooth butterfly centered at~$A$, and that the butterflies are convex inside each of the orthic triangles.
	\end{minipage}
\end{figure}

\vspace{-.4cm}

\begin{definition}\label{D:Section}
The surface~$\Spqr^A$ is the closure of the set of all unit vectors positively tangent to the family~$(\alpha_s)_{s\in(0,1)}$, that is, the set $\big\{\frac{d\alpha_s(t)/dt}{\Vert d\alpha_s(t)/dt\Vert} \mid s, t\in(0,1)\big\}$.
\end{definition}

Taking the closure of~$\Spqr^A$ is equivalent to adding to~$\Spqr^A$ the tangent vectors to~$b$, the vectors at~$B$ that point into the~$P$-part, the vectors at~$C$ that point into the~$Q$-part, and the vectors at~$A$ that point into the~$Q$-part or into the orthic triangles. 

Next, choosing two other foliations~$\beta_s$ and $\gamma_s$ of the orthic triangles by convex butterflies centered at~$B$ and $C$ respectively, we similarly define surfaces~$\Spqr^B$ and~$\Spqr^C$ as the closures of the set of all unit vectors positively tangent to~$\beta_s$ and $\gamma_s$. 

Now, we introduce~$c^Q_+$ to be the set of all vectors of~$\Spqr^A$ that point directly toward the point~$Q$ and $c^R_-$ to be the set of all vectors~$\Spqr^A$ whose opposite point directly toward~$R$. Then, as in Section~\ref{S:p2}, $c^Q_+$ and~$c^R_-$ are loops in~$\U\Orb$. The loops $c^P_+$ and $c^R_+$ in~$\Spqr^B$ and $c^P_-$ and $c^Q_-$ in $\Spqr^C$ are defined similarly (see Figure~\ref{F:PQR}).

\begin{figure}[ht]
	\begin{picture}(200,80)(0,0)
	\put(0,0){\includegraphics*[width=\textwidth]{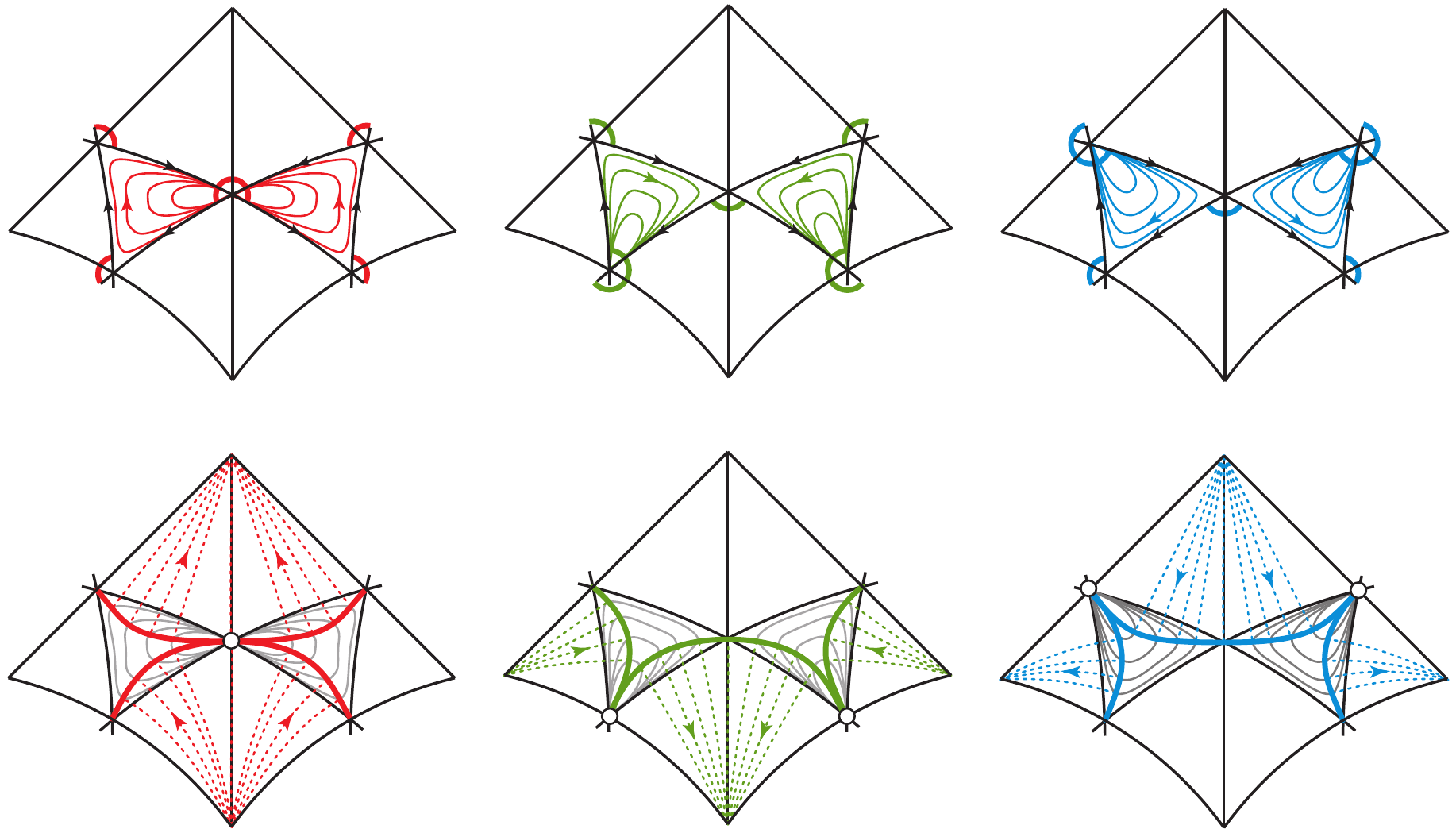}}
	\put(3,67){$\Spqr^A$}
	\put(47,67){$\Spqr^B$}
	\put(90,67){$\Spqr^C$}
	\put(7,25){$c^Q_+$}
	\put(9,4){$c^R_-$}
	\put(53,3.5){$c^R_+$}
	\put(46,8){$c^P_-$}
	\put(94,25){$c^Q_-$}
	\put(89,8){$c^P_+$}
	\end{picture}
	\caption{\small } 
	\label{F:PQR}
\end{figure}

\begin{lemma}\label{L:SPQR}
The surfaces~$\Spqr^A$, $\Spqr^B$ and~$\Spqr^C$ have one boundary component, namely the lift~$\tilde b$ of~$b$. All three surfaces are genus one Birkhoff sections for~$\flot$. The two curves~$(c^Q_+, c^R_-)$ (\emph{resp.} $(c^R_+, c^P_-)$, \emph{resp.} $(c^P_+, c^Q_-)$) form a basis of~$H_1(\Spqr^A, \Z)$ (\emph{resp.} $H_1(\Spqr^B, \Z)$, \emph{resp.} $H_1(\Spqr^C, \Z)$).
\end{lemma}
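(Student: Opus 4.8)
The plan is to follow the template already established by Lemma~\ref{L:S2QR} and Lemma~\ref{L:CurvesQR}, adapting the three ingredients---boundary identification, Birkhoff property, and Euler characteristic---to the butterfly-foliation surfaces.

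\textbf{Boundary.} First I would check that the closure operation described after Definition~\ref{D:Section} adds exactly the tangent vectors to~$b$ (together with finitely many fiber segments at~$A$, $B$, $C$ that glue the two orthic triangles together along~$\tilde b$). Since each $\alpha_s$ is a butterfly tending to the billiard geodesic~$b$ as $s\to 0$ and $s\to 1$, the limiting vectors on both sides are the positively tangent vectors to~$b$; hence the topological boundary of $\Spqr^A$ in $\UOrb$ is precisely the lift $\tilde b$, a single closed curve because $b$ is a single closed geodesic. The same argument applies verbatim to $\Spqr^B$ and $\Spqr^C$.

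\textbf{Birkhoff property.} This is the analogue of the cross-cut argument in Lemma~\ref{L:S2QR}, and I expect it to be the main obstacle. Lift everything to $\Hy$: the lifts of $b$ cut $\Hy$ into copies of the $P$-, $Q$-, $R$-parts and the two orthic triangles. I want to show that any geodesic $g$ not in the $\Gpqr$-orbit of $b$ meets $\Spqr^A$ within bounded time. The key point is that $\Spqr^A$, being the unit tangent field to a foliation of the two orthic triangles by butterflies centered at $A$, contains a vector over (almost) every point of the orthic triangles, with direction varying; concretely, a geodesic arc crossing an orthic triangle must at some interior point be positively tangent to the butterfly through that point (an intermediate-value/transversality argument on the angle function along the arc), unless it is parallel to all of them, which the convexity of the butterflies rules out except for a measure-zero set handled by the vectors at $A$, $B$, $C$ added in the closure. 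Since every orbit enters an orthic triangle within bounded time (the $P$-, $Q$-, $R$-parts are ``small'' and a geodesic cannot stay in one of them forever), this gives the bounded-return property. One must also treat orbits passing through the singular fibers of $P$, $Q$, $R$, exactly as the $P$-fiber was handled in Section~\ref{S:p2}. The Anosov/pseudo-Anosov structure on the first-return map will then follow from Fried's argument as before, so it need not be proved here.

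\textbf{Euler characteristic and homology basis.} For the genus, I would give each surface an explicit cell structure, as in the proof of Lemma~\ref{L:S2QR}: $\Spqr^A$ is assembled from the tangent-vector fields over the two orthic triangles glued along the fibers over $A$, $B$, $C$ (and over $b$ at the boundary), and a direct count of $2$-cells, $1$-cells and $0$-cells gives $\chi(\Spqr^A)=-1$; with one boundary component this forces genus one. For the homology basis, I would argue as in Lemma~\ref{L:CurvesQR}: $c^Q_+$ and $c^R_-$ both lie in $\Spqr^A$, and they meet exactly once---at the fiber over the intersection of $b$ with the segment $QR$ (equivalently with the relevant butterfly axis), at the vector pointing toward~$Q$---so on the once-punctured torus $\Spqr^A$ their classes form a basis of $H_1$. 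The same reasoning gives $(c^R_+,c^P_-)$ for $\Spqr^B$ and $(c^P_+,c^Q_-)$ for $\Spqr^C$. The one subtlety to verify carefully is that $c^Q_+$ and $c^R_-$ really are embedded loops meeting transversally once and not, say, meeting with higher multiplicity or tangentially; this comes down to the convexity of the butterflies guaranteeing that ``pointing toward $Q$'' and ``opposite pointing toward $R$'' each pick out a single point on each butterfly in a monotone way along the family parameter~$s$.
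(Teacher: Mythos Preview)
Your treatment of the boundary, the Euler characteristic, and the homology basis matches the paper's proof essentially verbatim: two discs, nine $1$-cells (six along $b$ and three in the fibers of $A,B,C$), six $0$-cells, giving $\chi=-1$; and the two loops meet once, hence form a basis.

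The Birkhoff argument, however, has a real gap. Your intermediate-value claim---that any geodesic arc crossing an orthic triangle must be \emph{positively} tangent to some butterfly~$\alpha_s$---is false as stated. The surface $\Spqr^A$ consists of \emph{oriented} tangent vectors to the butterflies, so only crossings going ``the right way'' produce an intersection with $\Spqr^A$. Concretely, a geodesic that leaves an $R$-part and enters a $P$-part (through one or two orthic triangles) does hit $\Spqr^A$, but one going from a $P$-part to an $R$-part does not: along that arc the angle between the geodesic direction and the butterfly tangent never vanishes with the correct sign. Thus ``every orbit enters an orthic triangle in bounded time'' is not enough; you also need that it eventually makes a crossing of the right type.

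The paper handles this by a coding argument: to an oriented geodesic $\gamma$ not in the $\Gpqr$-orbit of $b$ it assigns the bi-infinite word in $\{P,Q,R\}$ recording the successive parts visited (orthic triangles omitted), observes that $\gamma$ cannot traverse more than two consecutive orthic triangles so the word really is bi-infinite, and notes that each occurrence of a factor $RP$, $RQ$, $PQ$, $PP$, $QQ$ or $RR$ corresponds to exactly one tangency with $\alpha_s$. A short combinatorial check shows that any bi-infinite word in $\{P,Q,R\}$ contains infinitely many such factors with bounded gaps, giving the Birkhoff property. Your IVT idea can be salvaged, but only once you restrict attention to these ``good'' transitions and supply this combinatorial step.
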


\begin{proof}
Let~$\gamma$ be a geodesic in~$\Hy$ that is not in the~$\Gpqr$-orbit of~$b$. Define the \emph{code}~$c(\gamma)$ of $\gamma$ to be the bi-infinite word in the alphaber~$\{P, Q, R\}$ describing the different types of the regions crossed by $\gamma$: we write $P$, $Q$ or $R$ when~$\gamma$ goes through the interior of a $P$-part, a $Q$-part, or a $R$-part, and do not write anything when $\gamma$ goes through any of the two types of orthic triangle (this coding is neither injective nor surjective, but this is of no importance). Since $\gamma$ is not in the orbit of~$b$, it cannot cross more than two consecutive orthic triangles. Therefore, the code is indeed bi-infinite. Now, for every factor~$RP, RQ, PQ, PP, QQ$ or $RR$, there exists exactly one point where~$\gamma$ is tangent to the family~$\alpha_s$ and contributes that factor to~$c(\gamma)$. Since every bi-infinite word contains infinitely many such factors, the lift of~$\gamma$ in~$\U\Orb$ intersects $\Spqr^A$ infinitely many times.

For the genus, we observe that $\Spqr^A$ is made of the closures of two discs, corresponding to the lifts of the two orthic triangles. With the considered decomposition, there are six edges corresponding to the different segments of~$b$, plus three edges in the fibers of~$A,B$ and $C$. There are also six vertices, two in each of the fibers of~$A,B$ and $C$. Adding the contributions, we obtain $-1$ for the Euler characteristics of~$\Spqr^A$, and therefore one for its genus.
For the basis of~$H_1(\Spqr^A)$, as in Lemma~\ref{L:S2QR}, we observe that the considered loops intersect each other once.

The proof for the other two surfaces is similar.
\end{proof}

\subsection{First return maps}

We now mimic the construction in Section~\ref{S:p2} of the maps~$\phi^Q$ and~$\phi^R$, with one difference: the maps~$\phi^A$, $\phi^B$ and $\phi^C$ to be defined all have fixed points (but none in common), which correspond to the intersection points between the surfaces $\Spqr^A$, $\Spqr^B$ and $\Spqr^C$. For every tangent vector $v$ that lies in the surface~$\Spqr^A$ and not in the fiber of~$B$, we define $\phi^A(v)$ to be the first intersection between the orbit of the geodesic flow starting from~$v$ and the surface~$\Spqr^C$. For $v$ a tangent vector at~$B$ that points into the~$P$-part, we define~$\phi^A(v)$ as $v$ itself. 

\begin{lemma}\label{L:PQRreturn}
The map~$\phi^A$ is a homeomorphism from the torus~$\Spqr^A$ to the torus~$\Spqr^C$. It is conjugated to the linear homeomorphism whose matrix with respect to the bases~$(c^Q_+, c^R_-)$ and~$(c^P_+, c^Q_-)$ is~$\left(\begin{smallmatrix} 0 & -1\\{1} &{q-1}\end{smallmatrix}\right)$.
\end{lemma}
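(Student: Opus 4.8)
The plan is to imitate, step by step, the proof of Lemma~\ref{L:QRreturn}, replacing the geodesic~$h$ by the billiard geodesic~$b$ and the picture around the order-$2$ point by the picture around~$Q$ produced by the butterfly foliations; the only genuine novelty is a small shift in one combinatorial count, which is what turns the exponent $q-2$ of the symmetric case into $q-1$ here.

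First I would check that $\phi^A$ is a homeomorphism from $\Spqr^A$ onto $\Spqr^C$. Away from the fiber of~$B$ it is the first-hitting map of $\Spqr^C$ along the orbits of~$\flot$, hence continuous and injective there; continuity across the locus of vectors based at~$B$ and pointing into the $P$-part holds because those vectors lie in both surfaces and are fixed by definition. Surjectivity follows from Lemma~\ref{L:SPQR}: both $\Spqr^A$ and $\Spqr^C$ are Birkhoff sections, and the coding used in its proof shows that an orbit meets $\Spqr^A$ exactly once between two consecutive meetings with $\Spqr^C$, so $\phi^A$ is a bijection, hence a homeomorphism. As in Lemma~\ref{L:QRreturn}, the weak stable and unstable foliations of~$\flot$ restrict to a contracting and an expanding foliation on each of the two surfaces and $\phi^A$ carries the first pair to the second; since both surfaces are tori these foliations are nonsingular, so Fried's argument lets one identify $\Spqr^A$ and $\Spqr^C$ with $\Tore^2$ in such a way that $\phi^A$ becomes linear. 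It then suffices to compute the action of $\phi^A$ on the basis $(c^Q_+, c^R_-)$ of $H_1(\Spqr^A,\Z)$.

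For that I would unfold~$\Orb$ around~$Q$ by gluing $q$ copies of the fundamental quadrangle $PQ\bar PR$, exactly as in Figure~\ref{F:2QR}, getting a polygon with~$Q$ at its centre; since $p\ge 3$ the images of~$P$ are now genuine corners, and the boundary of the $Q$-part in this unfolded picture is a polygon whose sides are arcs of~$b$ meeting at the successive images of~$C$ and of the bounce point~$A$. To obtain $\phi^A(c^Q_+)$: the loop $c^Q_+$ consists of the unit vectors positively tangent to the butterflies~$\alpha_s$ that point at~$Q$; under the flow it enters this polygon, crosses the fiber of~$Q$, and leaves through the opposite side(s), where it is positively tangent to the family~$\gamma_s$ and hence belongs to $\Spqr^C$; quotienting by the order-$q$ rotation at~$Q$ identifies this exit loop with $c^Q_-$, so $\phi^A(c^Q_+)=c^Q_-$, the first column $\left(\begin{smallmatrix}0\\1\end{smallmatrix}\right)$ of the matrix. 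To obtain $\phi^A(c^R_-)$: the vector of~$c^R_-$ based at~$B$ and pointing into the $P$-part is fixed, while the remaining vectors sweep, under the flow, through the $Q$-polygon; after an isotopy of $\Spqr^C$ built from fiberwise rotations --- which does not change the homology class, just as in Lemma~\ref{L:QRreturn} --- the image loop breaks into a piece contributing the opposite of $c^P_+$, coming from the two sides adjacent to the entry side, and a piece contributing $q-1$ copies of~$c^Q_-$, coming from the rest of the boundary. Hence $\phi^A(c^R_-)=-c^P_+ +(q-1)c^Q_-$, the second column $\left(\begin{smallmatrix}-1\\q-1\end{smallmatrix}\right)$, so assembling the two columns gives the matrix $\left(\begin{smallmatrix}0 & -1\\ 1 & q-1\end{smallmatrix}\right)$.

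The hard part is that last count: one must describe precisely, in the unfolded picture around~$Q$, how the boundary arcs of the $Q$-polygon are shared between the two halves of~$-c^P_+$ and the copies of~$c^Q_-$ after the normalising isotopy, and it is exactly the extra ``bounce'' vertices of~$b$ (the copies of~$A$) that make the answer $q-1$ rather than $q-2$. Two minor points also need care: separating the two parities of~$q$ when locating the side(s) of the $Q$-polygon opposite the entry side, and checking that the isotopy used to normalise $\phi^A(c^R_-)$ is genuinely supported inside $\Spqr^C$. The analogous computations for the maps $\phi^B$ and $\phi^C$ then follow by the evident cyclic symmetry.
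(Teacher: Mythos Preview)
Your outline follows the paper's proof almost verbatim: unfold around~$Q$, push $c^Q_+$ through the fiber of~$Q$ to get~$c^Q_-$, and track $c^R_-$ to obtain $-c^P_+ + (q-1)c^Q_-$. The structure and the final matrix are correct.

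There is one geometric inaccuracy in what you call ``the hard part''. You write that, apart from the fixed vector at~$B$, all of $c^R_-$ ``sweeps through the $Q$-polygon'' and that the $-c^P_+$ contribution comes from ``the two sides adjacent to the entry side''. That is not what happens. The portion of $c^R_-$ near~$B$ (precisely, the arc between~$B$ and the point where the line $RC$ meets $c^R_-$, together with its mirror near~$\bar B$) never enters the $Q$-part at all: those vectors point into the $P$-part side of the orthic triangles, and under the flow they become tangent to the family~$\gamma_s$ while still inside the original orthic triangles $ABC$ and $A\bar B\bar C$, landing on arcs from~$B$ to~$C$ and from~$\bar B$ to~$\bar C$. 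It is this pair of arcs that assembles to~$-c^P_+$. Only the remaining portion of $c^R_-$ crosses the $Q$-part; it exits through the $2q-2$ boundary arcs $C_1A_2, A_2C_2,\dots,A_qC_0$ of the unfolded $Q$-polygon, and those $2q-2$ arcs pair up in the quotient to give the $q-1$ copies of~$c^Q_-$. So the reason the exponent jumps from $q-2$ to $q-1$ is not that ``extra bounce vertices'' subdivide the exit boundary, but that the piece producing $-c^P_+$ is already accounted for before anything enters the $Q$-polygon, leaving all $2q-2$ remaining arcs for~$c^Q_-$. Once you correct this picture, your sketch matches the paper's argument.
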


\begin{figure}[ht]
	\begin{picture}(140,70)(0,0)
	\put(0,0){\includegraphics*[width=\textwidth]{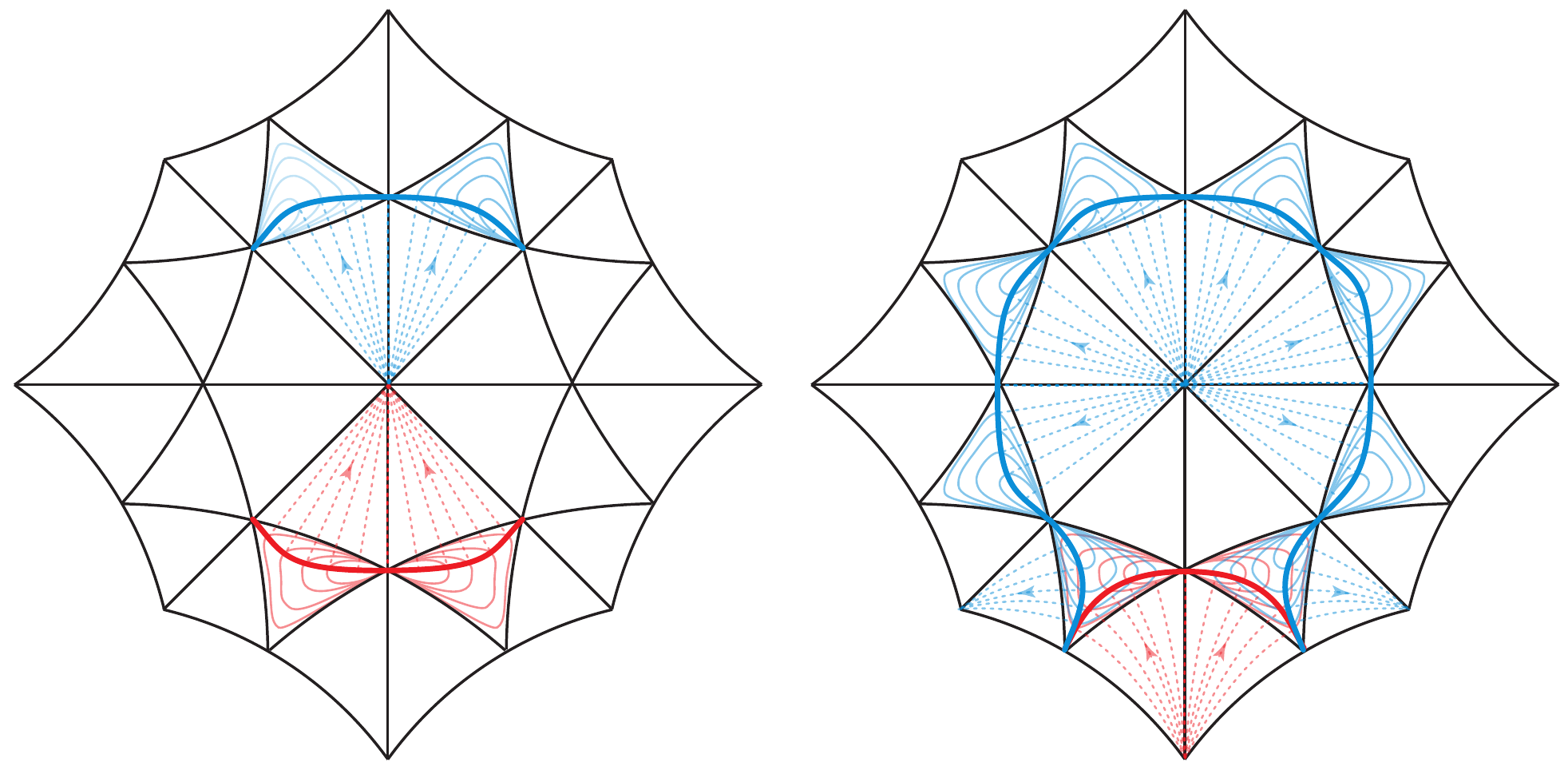}}
	\put(34,28){$Q$}
	\put(10,10){$P_0$}
	\put(28,-2){$R_1$}
	\put(49,10){$P_1$}
	\put(17.2,22){$A_0$}
	\put(40.2,22){$A_1$}
	\put(26.5,10){$c^Q_+$}
	\put(26.5,49){$c^Q_-$}
	\put(99,3){$c^R_-$}
	\put(73.5,32){$c^Q_-$}
	\put(96,49){$c^Q_-$}
	\put(112,32){$c^Q_-$}
	\put(109,8){$c^P_+$}
	\end{picture}
	\caption{\small The images of~$c^Q_+$ and $c^R_-$ under~$\phi^A$ in the case $p=3$, $q=4$, $r=5$. On the left, we see that~$c^Q_+$ is mapped to~$c^Q_-$. On the right, $c^R_-$ is mapped to a long curve, which is homologous to $q-1$ times~$c^Q_-$ plus the opposite of~$c^P_+$.} 
	\label{F:phiPQR}
\end{figure}

\begin{proof}
The argument is similar to the one for Lemma~\ref{L:QRreturn}. The continuity and the injectivity of~$\phi^A$ need no new argument.

We now unfold~$\Orb$ around the point~$Q$ by gluing~$q$ copies of the quadrangle~$PQ\bar PR$. We obtain a~$2q$-gon, that we denote by~$P_0R_1P_1\dots P_{q-1}R_{q}$ (with $R_1=R$, $P_0=P$ and $P_1=\bar P)$, or simply by $P\!ol^Q$. We similarly denote by~$A_i$ and~$C_i$ the corresponding images of~$A$ and $C$. 

We now determine the first intersections of the orbits of~$\flot$ that start on the curves~$c^Q_+$ and $c^R_-$ in~$P\!ol^Q$ with the surface~$\Spqr^C$. By definition, these curves are the images of~$c^Q_+$ and $c^R_-$ under~$\phi^A$.

The curve~$c^Q_+$ is made of those unit vectors tangent to the family~$\alpha_s$ that point toward~$Q$. Therefore, when following the flow~$\flot$, the points of $c^Q_+$ enter the $Q$-part. They first reach the fiber of~$Q$, and then continue on the other side of~$Q$ until they reach the orthic triangles opposed to the starting ones in the $Q$-part. At that moment, the orbits intersect the surface~$\Spqr^C$ when the geodesics they are following are tangent to the family~$\gamma_s$. They form then a curve that connects~$C_{q/2}$ to $A_{(q+2)/2}$ and then to $C_{(q+2)/2}$ if $q$ is even and $A_{(q+1)/2}$ to $C_{(q+1)/2}$ to $A_{(q+3)/2}$ if $q$ is odd (see Figure~\ref{F:phiPQR} left). In both cases, the curve we obtain projects, in~$\U\Orbqr$, to the loop~$c^Q_-$, yielding $\phi^A(c^Q_+)=c^Q_-$.

As for~$\phi^A(c^R_-)$, we see that the (unique) vector of~$c^R_-$ lying above~$B$ (which, by definition, points toward~$R$) is fixed by~$\phi^A$. When $\phi^A$ is applied, that is, when we follow the geodesic flow, the two parts of~$c^R_-$ that are close to~$B$ (one delimited by~$B$ and the intersection of~$RC$ with~$c^R_-$ in the orthic triangle~$ABC$ and one delimited by~$\bar B$ and the intersection of~$R\bar C$ with~$c^R_-$ in~$A\bar B\bar C$) stay in their respective orthic triangles and become tangent to the family~$\gamma_s$ on two curves that join~$B$ to~$C$ and $\bar B$ to~$\bar C$ respectively. When the orientation is taken into account, the union of the latter two curves is isotopic to~$-c^P_+$ (see Figure~\ref{F:phiPQR} right).

The rest of~$c^R_-$ goes through the~$Q$-part and become tangent to~$\gamma_s$ above $2q-2$ curves that respectively connect $C_1$ to~$A_2$, $A_2$ to~$C_2$, \dots, $A_q$ to~$C_0$ in the corresponding orthic triangles. Each of these curves can be deformed by isotopy to the corresponding image of~$c^Q_-$, so that, in the quotient orbifold, their union is isotopic to $q-1$ times~$c^Q_-$. 
\end{proof}

Using an exactly similar construction, we define a map~$\phi^C$ from~$\Spqr^C$ to~$\Spqr^B$ and a map~$\phi^B$ from~$\Spqr^B$ to~$\Spqr^A$. We can now complete the argument.

\begin{proof}[Proof of Proposition~\ref{T:FirstReturn} (second case)]
Consider the three Birkhoff sections $\Spqr^A$, $\Spqr^B$ and $\Spqr^C$ given by Lemma~\ref{L:SPQR}. We argue as in Section~\ref{S:p2}. Starting from any point of~$\Spqr^A$ and following the geodesic flow for some bounded time, we reach the surface~$\Spqr^C$. When continuing, we then reach~$\Spqr^B$, and then $\Spqr^A$ again. Therefore the first return map on~$\Spqr^A$ is obtained by applying~$\phi^A$ first, then~$\phi^C$, and then~$\phi^B$. In terms of matrices, and in the basis~$(c^Q_+, c^R_-)$ of~$\Spqr^A$, that map is then conjugated to the product $\left(\begin{smallmatrix} 0 & -1 \\ 1 & r-1 \end{smallmatrix} \right) \left(\begin{smallmatrix} 0 & -1 \\ 1 & p-1 \end{smallmatrix} \right) \left(\begin{smallmatrix} 0 & -1 \\ 1 & q-1 \end{smallmatrix} \right)$. In terms of the standard generators~$X$ and~$Y$ of~$\SLZ$, the latter product is equal to $X^{-1}YX^{r-2}X^{-1}YX^{p-2}X^{-1}YX^{q-2}$, which is conjugated to $X^{p-3}YX^{q-3}YX^{r-3}Y$, as announced.

The other genus one Birkhoff sections are obtained by reversing the direction of the geodesic~$b$. 
By the same construction as above, we obtain three other Birkhoff sections, and the same arguments lead to a first resturn map conjugated to~$X^{p-3}YX^{r-3}YX^{q-3}Y$. This result can also be obtained  directly by observing that the new Birkhoff sections are obtained from the old ones by rotating all tangent vectors by an angle~$\pi$, and therefore by following the flow~$\flot$ in the reverse direction, so that the new monodromy is the inverse of the old one.
\end{proof}

\section{The case of four singular points}

We now turn to the last case in Proposition~\ref{T:FirstReturn}. Let $p, q, r, s$ denote four integers larger than 2. The case when some of them is equal to~$2$ requires some slight modifications that we will describe at the end of the section. The proof follows the same scheme as in the case of three singular points, with some modifications that make it more complicated. The most notable one is that the two orthic triangles are replaced by two quadrangles and that the boundaries of the constructed Birkhoff tori have two components.

In distinction to the case of three singular points, there exist many orbifolds with spherical base and four singular points of respective orders $p, q, r, s$. Indeed, Thurston~\cite{Thurston} showed that the associated Teichm\"uller space has dimension~2. Nevertheless, the argument of Ghys for surfaces~\cite{GhysConjugues} still applies, so that the associated geodesic flows all are conjugated. Therefore, it is enough to consider here one orbifold for every choice of~$p, q, r, s$. 

From now on, we fix a Fuchsian group~$\Gpqrs$ such that the quotient orbifold~$\Orbpqrs$ has four singular points~$P, Q, R, S$ of respective orders~$p, q, r, s$. We call~$\flotpqrs$ the geodesic flow on~$\U\Orbpqrs$.
We also choose a fundamental domain for~$\Orbpqrs$ in~$\Hy$, obtained by cutting the orbifold along the shortest geodesic connecting \WEN $Q$ to $P$, $P$ to $S$, and $S$ to $R$. The fundamental domain is therefore an hexagon, that we denote by~$P_1QP_2S_2RS_1$. The total angles are $2\pi/q$ at~$Q$ and $2\pi/r$ at~$R$, while the sum of the angles at~$P_1$ and $P_2$ is~$2\pi/p$ and the sum at~$S_1$ and $S_2$ is~$2\pi/s$ (see Figure~\ref{F:sectionPQRS} bottom).

\subsection{Four Birkhoff sections}

Let~$b_1$ be the oriented closed geodesic whose projection connects the segment~$P_1S_1$ to $P_2S_2$, and $b_2$ be the oriented closed geodesic whose projections connects~$P_2S_2$ to~$QP_2$, then $QP_1$ to~$RS_1$, then $RS_2$ to~$QP_1$, then $QP_2$ to~$RS_2$, and finally~$RS_1$ to~$P_1S_1$ (see Figure~\ref{F:sectionPQRS}). The geodesics~$b_1$ and~$b_2$ intersect in four points, that we denote by~$A, B, C, D$ in such a way that both~$b_1$ and $b_2$ go through them in this order. The union~$b_1\cup b_2$ plays the role of the billiard trajectory in Section~\ref{S:pqr}. It divides~$\Orbpqrs$ into six regions: the $P$-part which has two edges and vertices~$A$ and $B$, the $Q$-part with two edges and vertices~$C$ and $D$, the $R$-part with vertices~$B$ and $C$, the $S$-part with vertices~$D$ and $A$, and two quadrangles, called~$T_1$ and $T_2$ in such a way that the two geodesics go clockwise around~$T_1$, and counter-clockwise around~$T_2$.

\begin{figure}[ht]
	\begin{picture}(120,87)(0,0)
	\put(0,0){\includegraphics*[width=.9\textwidth]{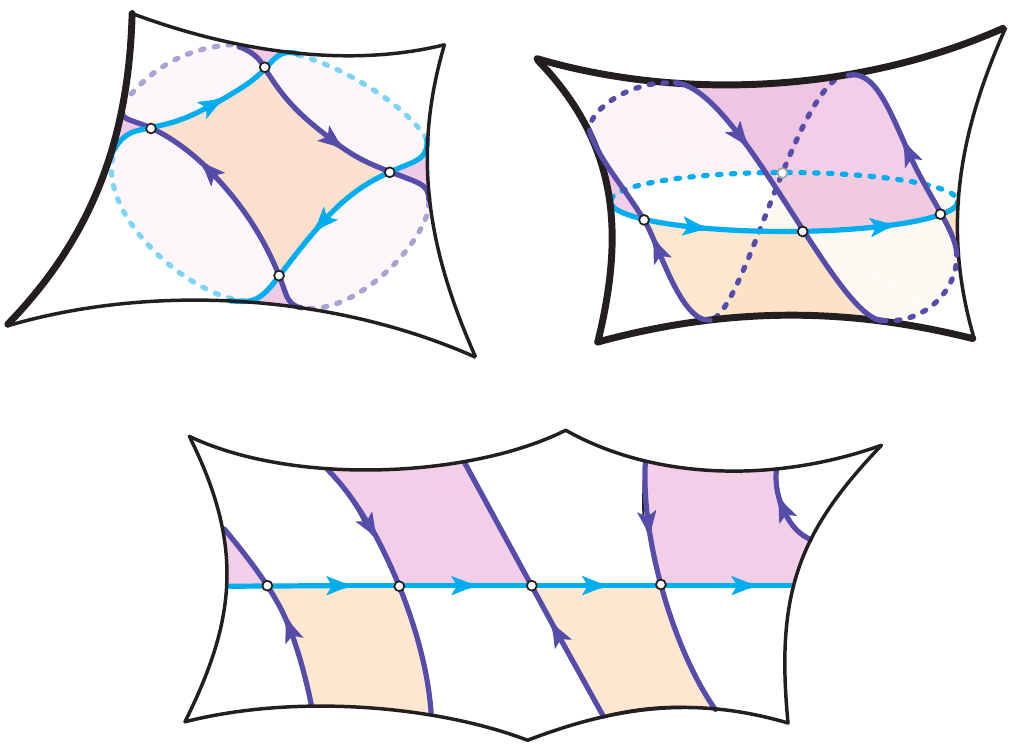}}
	\put(-2,48){$P$}
	\put(11,82){$Q$}
	\put(47,80){$R$}
	\put(54,45){$S$}
	\put(15,66.5){$A$}
	\put(25,75.5){$B$}
	\put(42.5,66.5){$C$}
	\put(32.5,52){$D$}
	\put(64,45){$P$}
	\put(60,79.5){$Q$}
	\put(110,45){$R$}
	\put(112,82.5){$S$}
	\put(70,56){$A$}
	\put(88,54.5){$B$}
	\put(103.5,56.5){$C$}
	\put(89.5,66){$D$}
	\put(17,1){$P_1$}
	\put(18,36.5){$Q_1$}
	\put(63,37){$S$}
	\put(56,-1.5){$R$}
	\put(89,1){$P_2$}
	\put(96,36){$Q_2$}
	\put(28,15){$A$}
	\put(40,19.5){$B$}
	\put(57,15){$C$}
	\put(70,19.5){$D$}
	\end{picture}
	\caption{\small Two views of an orbifold with four singular points~$P, Q, R, S$ and a fundamental domain obtained by cutting along $RP, PQ$ and $QS$. In blue the two geodesics~$b_1$ and $b_2$, which intersect at four points, called $A, B, C$ and $D$. The two quadrangles~$T_1, T_2$ are coloured.} 
	\label{F:sectionPQRS}
\end{figure}

Now we choose an oriented foliation $(\alpha_s(t))_{s,t\in(0,1)}$ of~$T_1\cup T_2$ by butterflies centered at~$A$, so that the orientation of the curves in both quadrangles coincides with the orientation of~$b_1$ and~$b_2$.

\begin{definition}\label{D:sectionPQRS}
We call~$\Spqrs^A$ the closure of the set of all unit tangent vectors to the family~$(\alpha_s(t))_{s,t\in(0,1)}$.
\end{definition}

The situation is similar to that of Sections~\ref{S:p2} and~\ref{S:pqr}. In particular taking the closure is equivalent to adding the tangent vectors to~$b_1$ and~$b_2$, the vectors at~$A$ that point into the~$Q$-part or into $T_1, T_2$, the vectors at~$B$ that point into the~$R$-part, the vectors at~$C$ that point into the~$S$-part, and the vectors at~$D$ that point into the~$P$-part (in red in Figure~\ref{F:sectionPQRS2}).

\begin{figure}[ht]
	\begin{picture}(120,40)(0,0)
	\put(0,0){\includegraphics*[width=\textwidth]{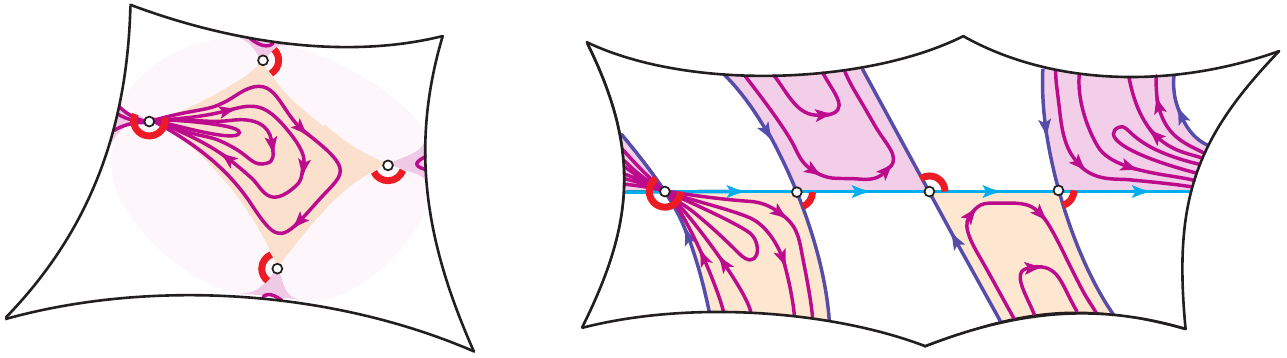}}
	\put(-2,4){$P$}
	\put(9,34){$Q$}
	\put(44,32){$R$}
	\put(47,0){$S$}
	\end{picture}
	\caption{\small The foliation of~$T_1\cup T_2$ by the family~$(\alpha_s(t))_{s,t\in(0,1)}$. The unit tangent vectors form the surface~$\Spqrs^A$.} 
	\label{F:sectionPQRS2}
\end{figure}

Now we define $c^{AB}$ to be the lift in~$\Spqrs^A$ of a loop in~$\Orbpqrs$ that winds counter-clockwise around the~$P$-part (for a similarity with the case of three singular points, one can for instance choose the loop so that~$c^{AB}$ is the set of all vectors of~$\Spqrs^A$ that point directly toward the point~$P$). Note that~$c^{AB}$ goes through the points~$A$ and $B$. Next, define~$c^{AD}$ to be the lift of a loop in~$\Orbpqrs$ that winds counter-clockwise around the~$Q$-part (again, one can choose the loop so that~$c^{AD}$ is the set of all vectors of~$\Spqrs^A$ whose opposite points toward~$Q$). Then define~$c^{AC}$ to be the lift of a closed loop that goes from~$A$ to~$RP_1$, from $RP_2$ to $C$ and then to $SQ_1$, and finally from $SQ_2$ to~$A$ in the fundamental domain.
We denote by $\wSpqrs^A$ the surface obtained by compactifying both boundaries of~$\Spqrs^A$ to a point.

\begin{figure}[ht]
	\begin{picture}(120,40)(0,0)
	\put(0,0){\includegraphics*[width=\textwidth]{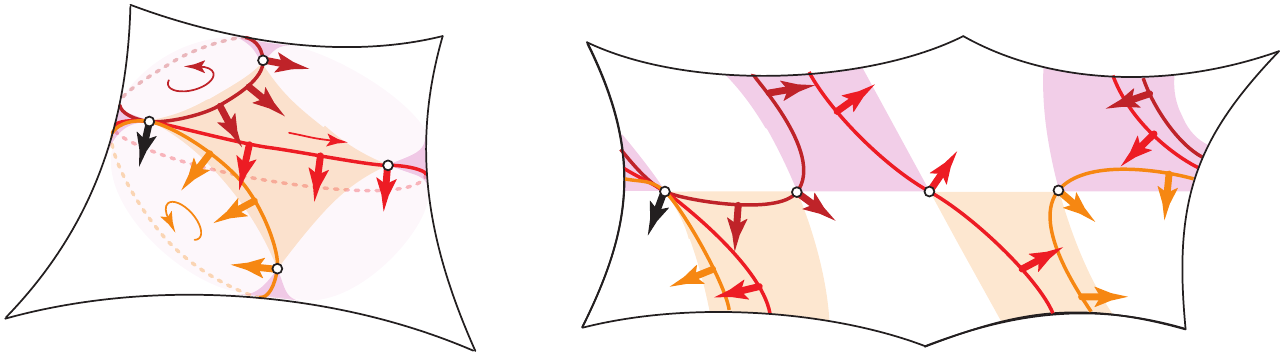}}
	\put(-2,4){$P$}
	\put(9,34){$Q$}
	\put(44,32){$R$}
	\put(47,0){$S$}
	\put(13.5,24.7){$A$}
	\put(25,32.3){$B$}
	\put(37,20.5){$C$}
	\put(28,8){$D$}
	\put(65.5,17){$A$}
	\put(74,17){$B$}
	\put(88,13){$C$}
	\put(100,17){$D$}
	\end{picture}
	\caption{\small The curves $c^{AB}, c^{AC}$ and~$c^{AD}$ on~$\Spqrs^A$. The black tangent vector is their common intersection point. The orientations are represented on the left by thinner arrows.} 
	\label{F:curvesPQRS}
\end{figure}

\begin{lemma}\label{L:SPQRS}
The surface~$\Spqrs^A$ has two boundary components, namely the lifts~$\tilde b_1$ and~$\tilde b_2$ of~$b_1$ and $b_2$. It is a genus one Birkhoff section for~$\flotpqrs$. The loops~$(c^{AD}, c^{AB})$ form a basis of~$H_1(\wSpqrs^A, \Z)$. We have the homological relation $c^{AC}=c^{AB} - c^{AD}$.
\end{lemma}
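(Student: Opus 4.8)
The plan is to run, one more time, the three-part scheme of Lemmas~\ref{L:S2QR} and~\ref{L:SPQR} (boundary, Birkhoff property, genus and homology), keeping track of the two new features of this case: the foliated region $T_1\cup T_2$ is a pair of quadrangles rather than of triangles, and the surface has two boundary components. For the boundary, I would first note that the butterflies $\alpha_s$ degenerate, as $s$ tends to the ends of the parameter interval, onto the two outer curves of $T_1\cup T_2$, which are exactly $b_1$ and $b_2$; as recorded just after Definition~\ref{D:sectionPQRS}, taking the closure adds the unit tangent vectors to $b_1$ and $b_2$ together with the four fiber arcs over $A,B,C,D$, and one checks that these assemble into precisely the two disjoint circles $\tilde b_1$ and $\tilde b_2$. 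Since $b_1$ and $b_2$ are distinct closed geodesics, $\tilde b_1$ and $\tilde b_2$ are disjoint periodic orbits of $\flotpqrs$, which gives the two boundary components.

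For the Birkhoff property, I would copy the coding argument of Lemma~\ref{L:SPQR}: to a geodesic $\gamma$ of $\Hy$ not in the $\Gpqrs$-orbit of $b_1\cup b_2$ one associates the bi-infinite word $c(\gamma)$ over $\{P,Q,R,S\}$ recording the successive parts crossed by $\gamma$, with nothing written while $\gamma$ runs through $T_1$ or $T_2$. The point to check is that $\gamma$ cannot cross more than a bounded number of consecutive quadrangles --- otherwise it would lie in, or accumulate onto, the orbit of $b_1\cup b_2$ --- so that $c(\gamma)$ is genuinely bi-infinite; then, using the convexity of the butterflies inside each quadrangle, for every pair of consecutive letters of $c(\gamma)$ there is exactly one point at which $\gamma$ is positively tangent to the family $\alpha_s$, that is, one intersection of the orbit of $\gamma$ with $\Spqrs^A$. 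As these factors occur with uniformly bounded gaps, every orbit meets $\Spqrs^A$ within bounded time, so $\Spqrs^A$ is a Birkhoff section.

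For the genus, I would use the cell decomposition analogous to the one in Lemma~\ref{L:SPQR}: $\Spqrs^A$ is the union of the closures of two $2$-cells, the lifts of $T_1$ and $T_2$; there are eight $1$-cells, one for each of the eight arcs into which $A,B,C,D$ cut $b_1\cup b_2$ along the boundary of $T_1\cup T_2$, plus four $1$-cells lying in the fibers of $A,B,C,D$; and there are eight $0$-cells, two in each of these four fibers. Adding the contributions gives $\chi(\Spqrs^A)=-2$; since the surface has two boundary components, compactifying both yields a closed surface $\wSpqrs^A$ of Euler characteristic $0$, i.e.\ a torus, so $\Spqrs^A$ has genus one. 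For the homology statement, the loops $c^{AD}$ and $c^{AB}$ both lie on $\Spqrs^A$ and meet transversally in the single vector singled out in Figure~\ref{F:curvesPQRS}, so, exactly as in Lemma~\ref{L:CurvesQR}, they form a basis of $H_1(\wSpqrs^A,\Z)\cong\Z^2$ with intersection number $1$. Writing $[c^{AC}]=m[c^{AB}]+n[c^{AD}]$, the two coefficients are read off from the signed intersection numbers of $c^{AC}$ with $c^{AB}$ and with $c^{AD}$ computed from Figure~\ref{F:curvesPQRS} (equivalently, by exhibiting on $\Spqrs^A$ an explicit isotopy carrying $c^{AC}$ onto the concatenation of $c^{AB}$ and the reverse of $c^{AD}$), which gives $m=1$, $n=-1$, that is $c^{AC}=c^{AB}-c^{AD}$.

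I expect the main obstacle to be the Birkhoff property, and specifically the combinatorial bookkeeping it now requires: one must follow the geodesics through the two quadrangles and verify, despite the extra crossing points $A,B,C,D$ and the extra curve $c^{AC}$, that the code $c(\gamma)$ is bi-infinite and that the tangencies with the family $\alpha_s$ occur with uniformly bounded gaps --- the analogue of the estimate ``$\gamma$ cannot cross more than two consecutive orthic triangles'' used in the three-point case. The homological relation $c^{AC}=c^{AB}-c^{AD}$ is the only genuinely new ingredient compared with the earlier lemmas, but it is routine once the picture of the three loops on $\Spqrs^A$ is in hand.
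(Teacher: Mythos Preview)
Your plan matches the paper's proof almost exactly: same coding argument for the Birkhoff property, same cell decomposition for the Euler characteristic, same one-intersection-point argument for the homology basis, and the homological relation is read off from a picture of the torus (the paper uses Figure~\ref{F:TABCD} to isotope $c^{AB}$, $c^{AD}$, $c^{AC}$ onto a vertical, horizontal, and diagonal loop respectively). Your Euler characteristic $\chi=-2$ is the correct one for a genus-one surface with two boundary components; the paper's stated value $-1$ is a slip.

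There is one overstatement worth flagging. You write that ``for every pair of consecutive letters of $c(\gamma)$ there is exactly one point at which $\gamma$ is positively tangent to the family $\alpha_s$.'' This is false as stated: the butterflies $\alpha_s$ are \emph{oriented}, so only half of the transitions are realised by a positive tangency. The paper makes this precise: the factors giving a tangency with~$\Spqrs^A$ are exactly the non-increasing ones with respect to the order $Q\ge R\ge S\ge P$, namely $QP,QR,QS,RP,RS,SP,QQ,RR,SS,PP$. The Birkhoff property then follows because every bi-infinite word in four letters must contain infinitely many non-increasing length-two factors (you cannot be strictly increasing forever). This is exactly the ``combinatorial bookkeeping'' you anticipated as the main obstacle, and it is the one place where you would need to be more careful than in your sketch.
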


\begin{proof}
The idea is the same as for Lemma~\ref{L:SPQR}. 
Let~$\gamma$ be a geodesic in~$\Hy$ that is not in the~$\Gpqrs$-orbit of~$b_1$ or~$b_2$. As in Section~\ref{S:pqr}, we define the \emph{code}~$c(\gamma)$ of $\gamma$ to be the bi-infinite word, this time in the alphabet~$\{P, Q, R, S\}$, that lists the types of the regions crossed by~$\gamma$, forgetting about the quadrangles.  The curve $\gamma$ cannot cross more than two consecutive quadrangles, so that the code is indeed bi-infinite. For every factor~$QP, QR, QS, RP, RS, SP, QQ, RR, SS$ or $PP$, there exists exactly one point corresponding to that factor where~$\gamma$ is tangent to the family~$\alpha_s$. This can be seen on the left of Figure~\ref{F:sectionPQRS2} or in Figure~\ref{F:curvesPQRS}. These factors correspond to non-increasing factors wih respect to the order~$Q\ge R\ge S\ge P$. Therefore every bi-infinite word contains infinitely many such factors, so that the lift of~$\gamma$ in~$\U\Orbpqrs$ intersects $\Spqrs^A$ infinitely many times.

For the genus, we observe that $\Spqrs^A$ is made of the closures of two discs, corresponding to the lifts of the two quadrangles. With this decomposition, there are eight edges corresponding to the different segments of~$b_1$ and~$b_2$, plus four edges in the fibers of~$A, B, C$ and $D$. There are also eight vertices, two in each of the fibers of~$A, B, C$ and $D$. Adding all contributions, we obtain $-1$ for the Euler characteristics of~$\Spqr^A$, whence one for its genus. For the basis of~$H_1(\wSpqr^A, \Z)$, we observe as in Lemma~\ref{L:S2QR} that the considered loops intersect each other once.

The torus~$\Spqrs^A$ is displayed in Figure~\ref{F:TABCD}. There the two boundary components are small circles, and the intersection of the fibers of~$A, B, C, D$ with~$\Spqrs^A$ are segments connecting them. In~$\wSpqrs^A$, the curve~$c^{AB}$ is isotopic to the concatenation of the part of~$b_1$ between~$A$ and $B$, of the part of the fiber of~$B$ in~$\Spqrs^A$, of the part of~$b_2$ between~$B$ and~$A$, and, finally, of the part of the fiber of~$A$ in~$\Spqrs^A$. Therefore it is isotopic to the purple vertical loop as depicted on the figure. Similar arguments show that $c^{AD}$ is isotopic to the orange horizontal loop, and that $c^{AC}$ is isotopic to the red diagonal loop. Then the expected homological relation $c^{AC}=c^{AB} - c^{AD}$ follows.
\end{proof}

\begin{figure}[h]
	\begin{picture}(45,45)(0,0)
 	\put(-5,-3){\includegraphics*[width=.39\textwidth]{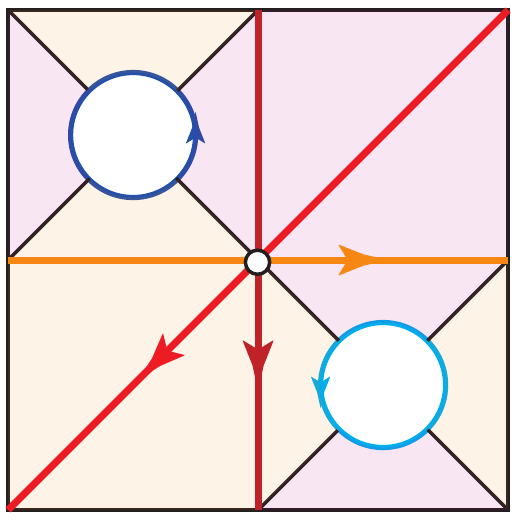}}
	\put(6,33){$b_2	$}
	\put(29.6,9.2){$b_1$}
	\put(14.5,27.5){$A$}
	\put(14.5,38){$B$}
	\put(24,0){$B$}
	\put(35,0){$C$}
	\put(-3,38){$C$}
	\put(39,15){$D$}
	\put(-3,26){$D$}
	\put(12.5,3){$c^{AB}$}
	\put(1,8){$c^{AC}$}
	\put(31,23){$c^{AD}$}
	\end{picture}
	\caption{\small }
	\label{F:TABCD}
\end{figure}

We define the surfaces~$\Spqrs^B$, $\Spqrs^C$ and~$\Spqrs^D$ similarly. These are also tori with two boundary components, and they are Birkhoff sections for~$\flotpqrs$. We also define the loops~$c^{BA}, c^{BC}$ and $c^{BD}$ in~$\Spqrs^B$, the loops~$c^{CA}, c^{CB}$ and $c^{CD}$ in~$\Spqrs^C$, and~$c^{DA}, c^{DB}$ and $c^{DC}$ in~$\Spqrs^D$. Then, for the same reason as above, the homological equalities~$c^{BD}=c^{BC} - c^{BA}$ in~$\wSpqrs^B$, $c^{CA}=c^{CD} - c^{CB}$ in~$\wSpqrs^C$ and $c^{DB}=c^{DA} - c^{DC}$ in~$\wSpqrs^D$ hold.

\subsection{First return maps}

For every tangent vector $v$ that lies in the surface~$\Spqrs^A$ and not in the fiber of~$C$ or~$D$, we define $\phi^A(v)$ to be the first intersection between the orbit of the geodesic flow starting from~$v$ and the surface~$\Spqrs^D$. For $v$ a tangent vector at~$C$ that points into the~$S$-part, or a tangent vector at~$B$ that points into the $R$-part, we define~$\phi^A(v)$ to be $v$ itself. In this way we have defined a map~$\phi^A$ from~$\Spqrs^A$ into~$\Spqrs^D$. 

\begin{lemma}\label{L:PQRSreturn}
The map~$\phi^A$ is a homeomorphism from~$\Spqrs^A$ to~$\Spqrs^D$. It is conjugated to the homeomorphism whose matrix in the bases~$(c^{AD}, c^{AB})$ and~$(c^{DC}, c^{DA})$ is~$\left(\begin{smallmatrix} 0 & -1 \\ 1 & p \end{smallmatrix}\right)$. 
\end{lemma}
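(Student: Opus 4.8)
The plan is to follow the template of the proofs of Lemmas~\ref{L:QRreturn} and~\ref{L:PQRreturn}. First I would check that $\phi^A$ is a homeomorphism: continuity and injectivity require no new argument, since $\phi^A$ is obtained by following the orbits of~$\flotpqrs$, every such orbit starting on~$\Spqrs^A$ away from the fibers of~$C$ and~$D$ meets~$\Spqrs^D$ after a bounded time (Lemma~\ref{L:SPQRS} applied to~$\Spqrs^D$), and the values prescribed on the fibers of~$B$ and~$C$ glue continuously with the neighbouring ones. Passing to the compactifications~$\wSpqrs^A$ and~$\wSpqrs^D$ and arguing as in Lemma~\ref{L:QRreturn}, it then suffices to compute the induced map $H_1(\wSpqrs^A,\Z)\to H_1(\wSpqrs^D,\Z)$, which I would read off from the images of the basis loops~$c^{AD}$ and~$c^{AB}$.

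For this I would unfold~$\Orbpqrs$ around the singular point~$P$, gluing~$p$ copies of the fundamental hexagon around the vertex~$P$ so that the angles at the $P$-corners add up to~$2\pi$; call~$P\!ol^P$ the resulting polygon and~$A_i, B_i, C_i, D_i$ the images of~$A,B,C,D$. The loop~$c^{AD}$ plays the role of~$c^Q_+$ in the earlier cases: the orbits issued from it thread once through the region they face, reach the opposite side of~$P\!ol^P$, and there become tangent to the family~$\delta_s$ of butterflies centered at~$D$; after modding out by the rotation at~$P$ the resulting curve projects to~$c^{DA}$, so $\phi^A(c^{AD})=c^{DA}$. The loop~$c^{AB}$ plays the role of~$c^R_-$: its unique vector lying above a fixed point of~$\phi^A$ stays fixed, the two sub-arcs of~$c^{AB}$ close to the fixed points remain in the two quadrangles~$T_1, T_2$ and become tangent to~$\delta_s$ along arcs whose union is isotopic, inside~$\Spqrs^D$, to~$-c^{DC}$, while the remaining portion winds around~$P$ inside~$P\!ol^P$ and becomes tangent to~$\delta_s$ along a chain of arcs which, after projecting and applying a convenient isotopy inside~$\Spqrs^D$, adds up to~$p$ copies of~$c^{DA}$. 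Using the homological relation $c^{DB}=c^{DA}-c^{DC}$ of Lemma~\ref{L:SPQRS} to keep everything in the basis~$(c^{DC}, c^{DA})$, this yields $\phi^A(c^{AB})=-c^{DC}+p\,c^{DA}$, whence the announced matrix~$\left(\begin{smallmatrix} 0 & -1 \\ 1 & p\end{smallmatrix}\right)$.

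I expect the main obstacle to be the bookkeeping in the computation of~$\phi^A(c^{AB})$. Because the surfaces now carry two boundary components, the unfolded polygon~$P\!ol^P$ is more intricate than the polygons used in Sections~\ref{S:p2} and~\ref{S:pqr}, and one must track precisely how many full turns around~$P$ the winding portion of~$c^{AB}$ performs --- so as to obtain the exponent exactly~$p$ rather than~$p{\pm}1$ --- and with which orientation the small arcs near the two fixed points are traversed --- so as to fix the sign of the~$c^{DC}$ term. A further point requiring care is checking that, after the prescribed isotopies inside~$\Spqrs^D$, the chain of arcs collapses to an integer combination of~$c^{DA}$ and~$c^{DC}$ only; this is exactly where the homological relations of Lemma~\ref{L:SPQRS}, rather than an explicit presentation of~$\pi_1$, are used.
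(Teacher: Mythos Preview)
Your approach is essentially the paper's: same homeomorphism argument, same unfolding around~$P$, same identification $\phi^A(c^{AD})=c^{DA}$, and same strategy of reading off $\phi^A(c^{AB})$ from the unfolded picture.

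There is, however, a small bookkeeping discrepancy in your treatment of $\phi^A(c^{AB})$, of exactly the kind you flag in your last paragraph. Only \emph{one} fixed point of~$\phi^A$ lies on~$c^{AB}$, namely the vector in the fiber of~$B$ (the fixed vector at~$C$ is not on~$c^{AB}$). The paper's geometric reading of the transported curve gives $c^{DB}+(p{-}1)\,c^{DA}$: the piece staying near the fixed point at~$B$ is isotopic to~$c^{DB}$, while the portion winding through the~$P$-part contributes $p{-}1$ copies of~$c^{DA}$. The homological relation $c^{DB}=c^{DA}-c^{DC}$ from Lemma~\ref{L:SPQRS} is then genuinely needed to rewrite this as $-c^{DC}+p\,c^{DA}$ in the basis $(c^{DC},c^{DA})$. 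Your write-up instead claims the near-fixed-point arcs yield~$-c^{DC}$ directly and the winding part yields $p$ copies of~$c^{DA}$; if that were the geometric picture, the homological relation you invoke would be superfluous. So the final matrix is right, but the intermediate decomposition and the role of the relation should be corrected to match the actual geometry.
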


\begin{proof}
The argument is similar to the one for Lemma~\ref{L:QRreturn} and~\ref{L:PQRreturn}. First, the continuity and the injectivity of~$\phi^A$ are straightforward.

Next, the same argument as in Lemma~\ref{L:PQRreturn} shows that the image of~$c^{AD}$ under~$\phi^A$ is isotopic to~$c^{DA}$. The point is then to determine the image of~$c^{AB}$. The argument is displayed in Figure~\ref{F:phiPQRS}, where we see that this image is isotopic to~$c^{DB} + (p{-}1)\, c^{DA}$. Finally, by Lemma~\ref{L:SPQRS} we have~$c^{DB} = c^{DA} - c^{DC}$, so that the latter sum is equal to~$-c^{DC} + p\,c^{DA}$. This gives the expected matrix. 
\end{proof}

\begin{figure}[ht]
	\begin{picture}(130,110)(0,0)
	\put(0,0){\includegraphics*[width=\textwidth]{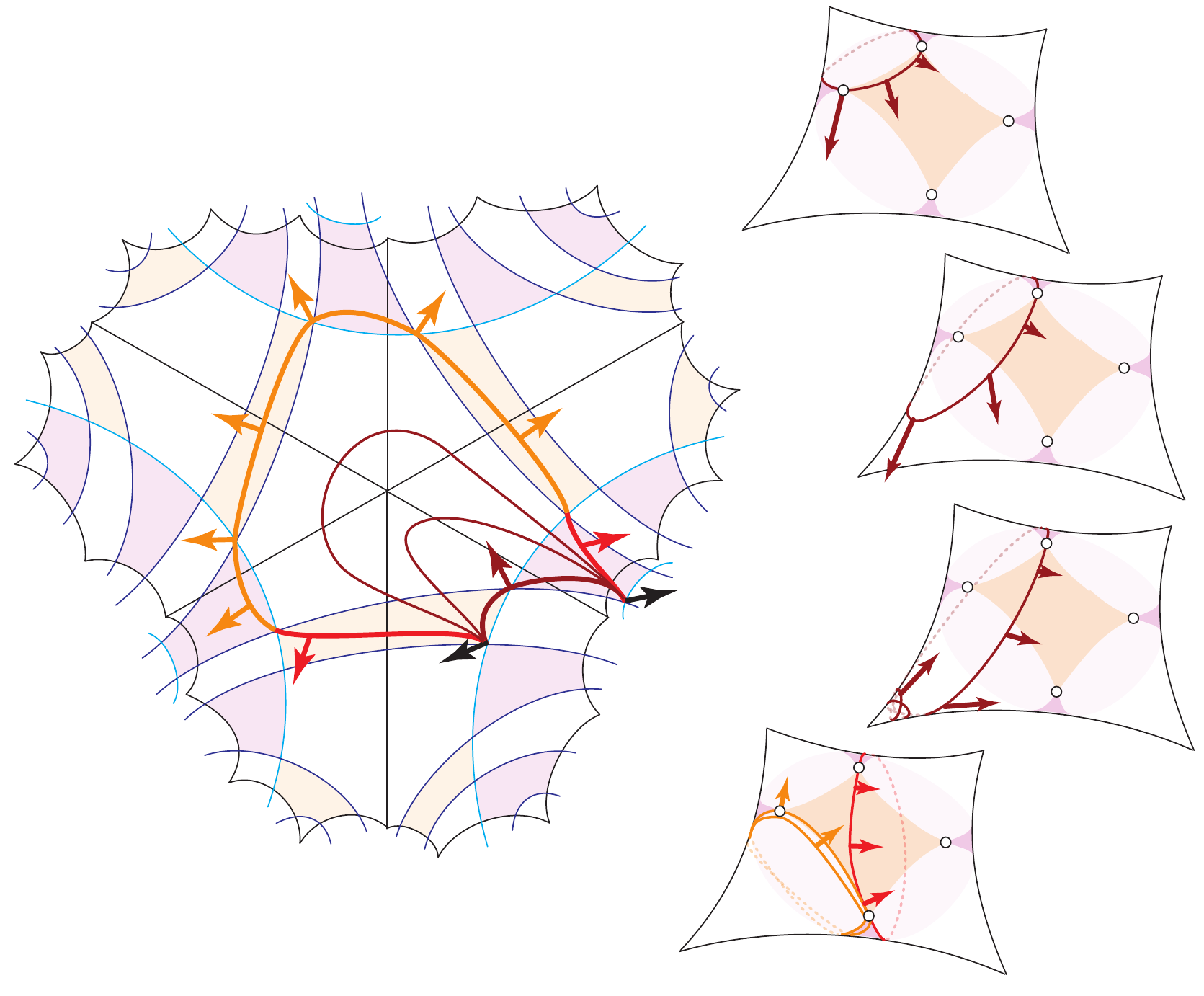}}
	\put(41,54){$P$}
	\put(13.5,39){$Q$}
	\put(16.5,29){$S$}
	\put(20.5,21){$Q$}
	\put(28,14){$P$}
	\put(38,15){$R$}
	\put(46,12){$P$}
	\put(58,16){$Q$}
	\put(63,28){$S$}
	\put(62.5,36.5){$Q$}
	\put(51,33.5){$B$}
	\put(53,40){$A$}
	\put(67,38){$B$}
	\put(29,39.5){$D$}
	\put(26,48){$A$}
	\put(33,67){$D$}
	\put(41,66){$A$}
	\put(55.5,49.5){$D$}
	\put(87.5,96.5){$A$}
	\put(95.5,102){$B$}
	\put(105,93.3){$C$}
	\put(93.5,83){$D$}
	\put(77,82){$P$}
	\end{picture}
	\caption{\small Transportation of the loop~$c^{AB}$ by the geodesic flow keeping the point that belongs to the fiber of~$B$ fixed. The image crosses the fiber of~$P$ (here we have~$p=3$). After some time, it reaches~$\Spqrs^D$ and is then isotopic to~$c^{DB} + (p{-}1)\, c^{DA}$.} 
	\label{F:phiPQRS}
\end{figure}

We define the maps~$\phi^B, \phi^C$ and~$\phi^D$ similarly. We can now conclude.

\begin{proof}[Proof of Proposition~\ref{T:FirstReturn} (last case)]
Consider the four Birkhoff sections~$\Spqrs^A$, $\Spqrs^B$, $\Spqrs^C$ and~$\Spqrs^D$ given by Lemma~\ref{L:SPQRS}. Starting from any point of~$\Spqrs^A$ and following the geodesic flow for some time (which is bounded, but not the same for all points), we reach the surface~$\Spqrs^D$, then~$\Spqrs^C$, then~$\Spqrs^B$, and then~$\Spqrs^A$ again. Therefore the first return map on~$\Spqrs^A$ is obtained by applying~$\phi^A$, then~$\phi^D$, $\phi^C$, and finally~$\phi^B$. In terms of matrices, and in the basis~$(c^{AD}, c^{AC})$, it is therefore conjugated to the product~$\left(\begin{smallmatrix} 0 & -1 \\ 1 & q \end{smallmatrix} \right) \left(\begin{smallmatrix} 0 & -1 \\ 1 & r \end{smallmatrix} \right) \left(\begin{smallmatrix} 0 & -1 \\ 1 & s \end{smallmatrix} \right) \left(\begin{smallmatrix} 0 & -1 \\ 1 & r \end{smallmatrix} \right)$. In terms of the standard generators~$X$ and~$Y$ of~$\SLZ$, the latter product is equal to $X^{-1}YX^{q-1}X^{-1}YX^{r-1}$ $X^{-1}YX^{s-1}X^{-1}YX^{p-1}$, which is conjugated to~$X^{p-2}YX^{q-2}YX^{r-2}YX^{s-2}Y$.

The other genus one Birkhoff sections are now obtained by changing the choice of the geodesics~$b_1$ and~$b_2$. For every cyclic ordering of the letters~$P, Q, R$ and $S$, we can find two geodesics whose union divides the orbifold~$\Orbpqrs$ into two quadrangles and four parts containing the singular points, so that the adjacencies between the different parts follow the cyclic order. Applying the same strategy as above then gives the expected monodromies.

In the case when some parameter equals $2$, say~$p$, the picture degenerates: the $P$-part of~$\Orbpqrs$ collapses to the point~$P$ and the quadrangles~$T_1$ and $T_2$ collapse to triangles. Nevertheless, one easily verifies that the construction and all subsequent observations remain valid, so that the result still holds.
\end{proof}

\section{Further questions}

Theorem~\ref{T:B} gives a positive answer to Ghys' Question~\ref{Q:Ghys} in several particular cases, but the general case remains open. The simplest cases for which the answer is unknown correspond to the conjugation classes described by words containing five~$X$ and five~$Y$.

\begin{question}
Is there a hyperbolic 2-orbifold whose geodesic flow admits a genus one Birkhoff section with first return map conjugated to~$\aut{X^5Y^5}$? or to $\aut{(XY)^5}$?
\end{question}

The construction of Birkhoff-Fried-Ghys-Hashiguchi shows that the answer to Question~\ref{Q:Ghys} is positive for the classes of the form~$(X^2Y^{g-1})^2$ with $g\ge 2$, and we observe that these classes all are eligible by Theorem~\ref{T:B}. By contrast, the construction by Brunella~\cite{Brunella} shows that the answer is also positive for the classes of the form~$X^2(X^2Y^{g-1})^2$, but the latter are not eligible by Theorem~\ref{T:B}, since they contain six~$X$. This naturally leads to
 
\begin{question}
Is there a simple construction of genus one sections for geodesic flows that simultaneously includes the examples of Birkhoff, the examples of Brunella, and the examples of Proposition~\ref{T:FirstReturn}?
\end{question}

In a different perspective, our construction shows that the geodesic flow on every compact hyperbolic orbifold with spherical base and three or four singular points admits Birkhoff sections of genus~one. Whether such a construction exists for every orbifold is not clear. In particular, the analog of our construction for spheres with~$k$ singular points gives sections of genus~$\lfloor (k-1)/2\rfloor$, which is larger than~$1$ for $k \ge 5$. Our attempts to modify the construction for a sphere with five singular points have failed so far, so that the following particular case of Fried's Question~\ref{Q:Fried} is still open:

\begin{question}
Does the geodesic flow on a sphere with five singular points admit a genus one Birkhoff section?
\end{question}

\bibliographystyle{alpha}

\begin{thebibliography}{}

\end{thebibliography}


\begin{thebibliography}{5}

\bibitem{ACampo}{\sc N.\,A'Campo}, {\it Generic immersions of curves, knots, monodromy and gordian number}, {Publ. Math. IH\'ES} {\bf 88} (1998), 152--169.

\bibitem{Anosov}{\sc D.\,V.\,Anosov}, {\it Geodesic flows on closed Riemannian manifolds with negative curvature}, Proc. Steklov Inst. Mathematics. {\bf 90} (1967).

\bibitem{Birkhoff}{\sc G.\,Birkhoff}, {\it Dynamical systems with two degrees of freedom}, 
Trans. of the Amer. Math. Soc. {\bf 18} (1917), 199--300.

\bibitem{BW}{\sc J.\,Birman, R.\,Williams}, {\it Knotted Periodic Orbits in Dynamical Systems II: Knot-Holders for Fibered Knots}, in S.\,Lomonaco Jr. ed., Contemporary Mathematics {\bf 20}: Low Dimensional Topology
(1981), 1--60.

\bibitem{Brunella}{\sc M.\,Brunella}, {\it On the discrete Godbillon-Vey invariant and Dehn surgery on geodesic flows}, {Ann. Fac. Sc. Toulouse} {\bf 3} (1994), 335--346.


\bibitem{Levogyre} {\sc Pi.\,Dehornoy}, {\it Geodesic flow, left-handedness, and templates}, {arXiv:1112.6296v2}.



\bibitem{LivreAlgebre}{\sc B.\,Fine}, {\it Trace classes and quadratic forms in the modular group}, {Canad. Math. Bull. {\bf 37}} (1994), 202--212.

\bibitem{FriedAnosov}{\sc D.\,Fried}, {\it Transitive Anosov flows and pseudo-Anosov maps}, {Topology} {\bf 22} (1983), 299--303.

\bibitem{GhysConjugues}{\sc \'E.\,Ghys}, {\it Flots d'Anosov sur les 3-variétés fibrées en cercles}, Ergod. Th. \& Dynam. Sys. {\bf 4} (1984), 67--80.

\bibitem{GhysGodbillon}{\sc \'E.\,Ghys}, {\it Sur l'invariance topologique de la classe de Godbillon-Vey}, Ann. Inst. Fourier {\bf 37} (1987), 59--76.


\bibitem{Hadamard}{\sc J. Hadamard}, {\it Les surfaces \`a courbures oppos\'ees et leurs lignes g\'eod\'esiques}, {J.~Math. Pures Appl.} {\bf 4} (1898), 27--74.

\bibitem{Hashiguchi}{\sc N.\,Hashiguchi}, {\it On the Anosov diffeomorphisms corresponding to geodesic flow on negatively curved closed surfaces}, J. Fac. Sci. Univ. Tokyo {\bf 37} (1990), 485--494.




\bibitem{Thurston}{\sc W.\,Thurston}, {\it The topology and geometry of three-manifolds}, (1980), \url{library.msri.org/books/gt3m/}

\end{thebibliography}

\end{document}